\documentclass[10pt]{amsart}
\usepackage{mathtools}
\usepackage{kpfonts}
\usepackage{amsmath}
\usepackage{amsthm}
\usepackage{amssymb}
\usepackage{mathdots}
\usepackage{pst-node}
\usepackage{enumitem}
\usepackage{algpseudocode}
\usepackage{algorithmicx}

\newtheorem{theorem}{Theorem}[section]
\newtheorem{lemma}[theorem]{Lemma}
\newtheorem{proposition}[theorem]{Proposition}

\newtheorem{example}[theorem]{Example}

\theoremstyle{remark}
\newtheorem{remark}[theorem]{Remark}

\numberwithin{equation}{section}

\def\dim{\mathop{\rm dim}\nolimits}

\def\Ima{\mathop{\rm Im}\nolimits}

\def\Ham{\mathop{\rm Ham}\nolimits}
\def\Skew{\mathop{\rm Skew}\nolimits}
\def\Sp{\mathop{\rm Sp}\nolimits}
\def\O{\mathop{\rm O}\nolimits}
\def\C{\mathop{\rm C}\nolimits}

\def\GL{\mathop{\rm GL}\nolimits}
\def\G{\mathop{\rm G}\nolimits}

\def\R{\mathop{\rm R}\nolimits}
\def\L{\mathop{\rm L}\nolimits}

\def\V{\mathop{\rm V}\nolimits}

\parskip=\smallskipamount

\makeatletter
\renewcommand*\env@matrix[1][*\c@MaxMatrixCols c]{%
  \hskip -\arraycolsep
  \let\@ifnextchar\new@ifnextchar
  \array{#1}}
\makeatother

\begin{document}

\title[Isotropy groups]{Centralizers of the complex orthogonal and symplectic group}
\author{Tadej Star\v{c}i\v{c}}
\address{Faculty of Education, University of Ljubljana, Kardeljeva Plo\v{s}\v{c}ad 16, 1000 Lju\-blja\-na, Slovenia}
\address{Institute of Mathematics, Physics and Mechanics, Jadranska
  19, 1000 Ljubljana, Slovenia}
\email{tadej.starcic@pef.uni-lj.si}
\subjclass[2000]{14L35, 15B10, 15A24, 20G20, 32M05, 15B57}
\date{May 10, 2026}


\keywords{
centralizer, isotropy groups, orthogonal group, symplectic group, skew-symmetric matrix, Hamiltonian matrix, matrix equation, Toeplitz matrix
}

\begin{abstract} 
We find a recursive algorithm for computing the precise centralizers of the complex orthogonal and symplectic groups, and hence the isotropy groups, with respect to the similarity transformation on the spaces of skew-symmetric and Hamiltonian matrices, respectively. These groups are conjugate to groups of certain nonsingular block matrices whose blocks are rectangular block Toeplitz.
\end{abstract}

\maketitle

\section{Introduction} \label{Sec1}

Let $\mathbb{F}$ be either the field $\mathbb{C}$ or $\mathbb{R}$.
Denote by 
$\O_n(\mathbb{F})\subset \GL_n(\mathbb{F})$ the subgroup of standard \emph{orthogonal}
matrices in the group of nonsingular $n$-by-$n$ matrices $\GL_n(\mathbb{F})$;
$R$ is orthogonal precisely when $R^{-1}=R^{T}$. 
Also, let $\Sp_{2n}(\mathbb{F})\subset \GL_{2n}(\mathbb{F})$ be the standard \emph{symplectic group} consisting of all $2n$-by-$2n$ matrices $R$ such that $R^{-1}=J_n^{-1}R^{T}J_n$, in which 
$J_n:=\begin{bsmallmatrix}
0 & I_n\\
-I_n & 0
\end{bsmallmatrix}$, where $I_n$ is the $n$-by-$n$ identity matrix.

If a matrix group $\G$ is either $\O_n(\mathbb{F})$ or $\Sp_{2n}(\mathbb{F})$ we consider its natural $\G$-\emph{con\-ju\-gation ($\G$-similarity)} action:
\begin{align}\label{ca} 
\G \times \G \to \G,\qquad (Q,R)\mapsto Q^{-1}RQ.
\end{align}
A fundamental object associated with 
this action is its \emph{centralizer (stabilizer) group} at any element $R\in\G$; see, e.g., 
\cite{Milne} for basic properties:
\begin{align}\label{CGR}
\C_{\G}(R):=\{Q\in \G \mid Q^{-1}R Q=R\}. 
\end{align}
There exists a unique multiplicative Jordan-Chevalley decomposition of $R=US=SU$ with $U\in \G$ unipotent (unitriangularizable) and $S\in \G$ semisimple (diagonalizable over $\mathbb{C}$), and we have $\C_{\G}(R)=\C_{\G}(S)\cap \C_{\G}(U)$. 
It is well known that 
the centralizer $\C_{\G}(S)$ for $\G=\O_{n}(\mathbb{C})$ and $\G=\Sp_{2n}(\mathbb{C})$, respectively,
is conjugate (via some nonsingular $P$) to a direct sum of the form
$\O_k(\mathbb{C})\oplus \O_m(\mathbb{C}) \oplus \bigoplus_{j}^{}\GL_{n_j}(\mathbb{C})$ 
and $\Sp_{2k}(\mathbb{C})\oplus \Sp_{2m}(\mathbb{C}) \oplus \bigoplus_{j}^{}\GL_{n_j}(\mathbb{C})$, 
while $\C_{\G}(U)$ is decomposed into a semidirect product (see 
monographs \cite[Theorem 3.1]{Seitz}, \cite[Ch. E.IV.2]{Spring}): 
\[
\C_{\G}(U)=\L (\C_{\G}(U)) \ltimes \R_u(\C_{\G}(U)),
\]
in which $\R_u(\C_{\G}(U))$ is a unipotent radical (i.e. a unipotent normal subgroup), and $\L (\C_{\G}(U))$ is a reductive part  
conjugate (via $P'$ possibly $\not =$ $P$) to the direct sum:
\[
\L (\C_{\G}(U))\cong
\left\{\begin{array}{ll}
\bigoplus_{k \textrm{ odd}}^{}\O_{k}(\mathbb{C})\oplus \bigoplus_{k \textrm{ even}}^{}\Sp_{k}(\mathbb{C}), & \G=\O_{n}(\mathbb{C})\\
\bigoplus_{k \textrm{ odd}}^{}\Sp_{2k}(\mathbb{C})\oplus \bigoplus_{k \textrm{ even}}^{}\O_{k}(\mathbb{C}), & \G=\Sp_{2n}(\mathbb{C})
\end{array}
\right.\hspace{-2mm}.
\]
Also, the dimension of $\C_{\G}(U)$
has already been obtained.

In this paper we give a detailed description of $\R_u(\C_{\G}(U))$ for unipotent $U\in\G$, where $\G$ is either $\O_n(\mathbb{F})$ or $\Sp_{2n}(\mathbb{F})$. 
We show that centralizers are conjugate to groups of nonsingular block matrices with rectangular block Toeplitz blocks. 
We also provide a recursive algorithm for computing $\C_{\G}(R)$ for arbitrary $R\in \G$, with $\G$ either $\O_n(\mathbb{C})$ or $\Sp_{2n}(\mathbb{C})$. 
Moreover, these centralizers are conjugate to those of so-called \emph{(indefinite) generalized} orthogonal and symplectic group (Theorems \ref{stabw1}, \ref{stabw2}). 
We add that the situation in the real case is more involved (Remark \ref{Rfw}).

Given a nonsingular matrix $H$ which is either of size $n \times n$ and symmetric ($H^T=H$) or of size $2n \times 2n$ and skew-symmetric ($H^T=-H$), let $\O_{n}(\mathbb{F},H)\subset \GL_{n}(\mathbb{F})$ and $\Sp_{2n}(\mathbb{F},H)\subset \GL_{2n}(\mathbb{F})$, respectively, be the groups of $H$-orthogonal and $H$-symplectic matrices $R$ (both satisfying $H=R^{T} H R$). 
By writing $H=P^T I_n P$ or $H=P^T J_n P$, respectively, for some 
nonsingular $P$ (see, \cite[Corollaries 4.4.4, 4.4.19]{HornJohn}),
we obtain $\O_{n}(\mathbb{C},H)=P^{-1} (\O_{n}(\mathbb{C}))P^{}$ or $\Sp_{2n}(\mathbb{C},H)=P^{-1} (\Sp_{2n}(\mathbb{C})) P^{}$, and we have
\[
C_{\O_{n}(\mathbb{C})}(R) = P^{-1}(C_{\O_{n}(\mathbb{C},H)}(P^{-1} R P^{}))P^{} \quad\textrm{or}\quad
C_{\Sp_{n}(\mathbb{C})}(R) = P^{-1}(C_{\Sp_{n}(\mathbb{C},H)}(P^{-1} R P^{}))P^{}.
\]

If $H$ is $n$-by-$n$ nonsingular symmetric, denote by $\Skew_n (\mathbb{F},H)$ the 
$\mathbb{F}$-vector space of $H$-\emph{skew-symmetric} matrices; $A$ is $H$-skew-symmetric if and only if $A^{T}H = - H A$. Similarly, for $H$ nonsingular skew-symmetric of size $2n \times 2n$ let $\Ham_{2n}(\mathbb{F},H)$ be the $\mathbb{F}$-vector space of $H$-\emph{Hamiltonian} matrices consisting of all $A$ such that $A^{T}H = - H A$ as well. In case $H=I_n$ or $H=J_n$ these notions yield the standard skew-symmetric matrices $\Skew_n(\mathbb{F})$ and Hamiltonian matrices $\Ham_n(\mathbb{F})$.

If either $\G=\O_n(\mathbb{F},H)$, $\V=\Skew_n(\mathbb{F},H)$ or $\G=\Sp_{2n}(\mathbb{F})$, $\V=\Ham_{2n}(\mathbb{F},H)$ we consider the
$\G$-similarity action on $\V$:
\begin{align}\label{cb}  
\G \times \V \to \V,\qquad (Q,A)\mapsto Q^{-1}AQ.
\end{align}
In the language of Lie groups and Lie algebras, this action is the adjoint representation of $\O_n(\mathbb{F},H)$ or $\Sp_{2n}(\mathbb{F},H)$ as a classical Lie group on its Lie algebra $\Skew_n(\mathbb{F},H)$ or $\Ham_{2n}(\mathbb{F},H)$. For more on the theory of classical groups see, for example, a monograph \cite{Weyl}. We note that
(\ref{cb}) for $\G=\O_n(\mathbb{F})$, $\V=\Skew_n(\mathbb{F})$ also represents a congruence transformation 
and these play a central role in the study of the geometry $\Skew_n(\mathbb{F})$, as shown by Hua \cite[Theorem 10]{Hua45} 
(cf. \cite
{Wan}).

Furthermore, denote the \emph{isotropy (or stabilizer) group} with respect to (\ref{cb}) by
\begin{align}\label{SGA}
\Sigma_{\G}(A):=\{Q\in \G \mid Q^{-1}A Q=A\}, \qquad A\in\V.
\end{align}
If again either $H=P^T I_n P$ or $H=P^T J_n P$ for some nonsingular $P$, it follows that $P^{-1} \Skew_n(\mathbb{F}) P=\Skew_n(\mathbb{F},H)$ 
and $P^{-1} \Ham_n(\mathbb{F}) P = \Ham_n(\mathbb{F},H)$,
we have
\[
\Sigma_{\O_{n}(\mathbb{F},H)}(P^{-1} A P)=P \Sigma_{\O_{n}(\mathbb{F})}(A) P^{-1} \quad\textrm{or}\quad
\Sigma_{\Sp_{2n}(\mathbb{F},H)}(P^{-1} A P)=P \Sigma_{\Sp_{2n}(\mathbb{F})}(A) P^{-1}.
\]

Next, the exponential map $\exp\colon A\mapsto \sum_{j=0}^{\infty}\frac{1}{j!}A^j$ 
maps $H$-skew-symmetric ($H$-Hamiltonian) matrices to $H$-orthogonal ($H$-symplectic) matrices, and nilpotent matrices 
onto unipotent matrices.
If $A$ has only one eigenvalue $\lambda$, then it is maped to $\exp (A)$ with one eigenvalue $e^{\lambda}$, and we have 
\begin{equation}\label{eqAexp}
\Sigma_{\G}(A)=C_{\G}(\pm \exp (A)).
\end{equation}
Thus, for $N$ nilpotent $H$-skew-symmetric ($H$-Hamiltonian), there is precisely one unipotent $H$-orthogonal ($H$-symplectic) $U$ with $U=\exp (N)$, and $\Sigma_{\G}(N)=C_{\G}(U)$.

Taking into account the spectral properties of matrices,
we observe that the computation of centralizers to $H$-orthogonal ($H$-symplectic) matrices is equivalent to the computation of isotropy groups of $H$-skew-symmetric ($H$-Hamiltonian) matrices. 
Our general approach to tacle this problem is related to that in our papers \cite{TSOC} and \cite{TSOS}, 
in which the iso\-tro\-py gro\-ups under ortho\-gonal *congruence on Hermitian and orthogonal similarity 
on symmetric matrices were studied, respectively. However, the\-re are several quite subtle differences between 
the problems considered in the present paper and our previous work. These differences make the ana\-ly\-sis considerably more involved and give rise to certain new phenomena.

Finally, to some extent isotropy groups 
could help us solve the problem of simultaneous 
reduction under congruence for a quadruple $(A,B,C,D)$ with $A,B$ skew-symmetric and $C,D$ nonsingular symmetric. 
By applying the Autonne-Taka\-gi factorization and using a suitable orthogonal congruence transformation we 
first obtain $(A', K, I, D')$ with the identity $I$ and the skew-symmetric normal form $K$. 
Next, $A',D'$ are simplified by using the isotropy group of $K$. This 
might have a potential application to a system of transpose-Sylvester matrix equations $AX+X^TA=C$, $BX+X^TB=D$; 
for more on these equations see \cite{TeranDopi2}, \cite{DK}, \cite{DKS}.

\section{The main results}\label{secIG}

We recall the normal forms for $H$-skew-symmetric, $H$-orthogonal, $H$-Ha\-mil\-to\-nian and $H$-symplectic matrices under $H$-orthogonal or $H$-symplectic similarity, and with respect to appropriately chosen $H$; see, e.g., \cite{Gant}, \cite{Laub}, \cite{Mehl}, \cite{Well}. Note that $H$-skew-symmetric or $H$-orthogonal ($H$-Hamiltonian or $H$-symplectic) matrices
are $H$-orthogonally ($H$-symplectically) similar if and only if they are similar (\cite[Corollary 22]{HornMerino}). For tridiagonal skew-symmetric normal forms check \cite{Djok2}.

Let us denote the basic Jordan block with the eigenvalue $\lambda\in \mathbb{C}$ and the exchange matrix
with alternating signs, respectively, by
\begin{align}\label{Jblock}
  J_m(\lambda):=\begin{bmatrix}
                                                      \lambda    &  1       & \;     & 0    \\
						      \;     & \lambda     & \ddots & \;    \\     
						      \;     & \;      & \ddots &  1     \\
                                                      0     & \;      & \;     & \lambda   
                                   \end{bmatrix},\,\,\																	
																	\lambda\in \mathbb{C},\quad
\Gamma_m:=\begin{bmatrix}
0  &                &      & 1\\
 &              &   -1     &  \\
   &  \iddots  &  &  \\
(-1)^{m}  &           &     & 0 \\
\end{bmatrix}																													
	\qquad																(m\textrm{-by-}m).
\end{align}
First, suppose $K$ is a nonsingular symmetric matrix. The Jordan canonical form of a $K$-skew-symmetric matrix $A$ 
can only contain blocks of the form $J_m(0)$ with $m$ odd, and pairs of blocks of the form $J_m(\lambda)\oplus J_m(-\lambda)$,
in which either $\lambda\in \mathbb{C}^{*}$ (nonzero complex) or $\lambda=0$ and $m$ is even.
%
%
%
For a suitable choice of $K$ (see (\ref{symmK})), the $K$-skew-symmetric normal form for $A$ under ($K$-orthogonal) similarity is of the form
\small
\begin{align}
\mathcal{S}k_{}(A)  & =
\bigoplus_{j}^{} 
\big(J_{\alpha_j}(\lambda_j)\oplus -(J_{\alpha_j}(\lambda_j))^{T}\big)
\oplus
\bigoplus_{k}^{} 
\big(J_{2\beta_k}(0)\oplus -(J_{2\beta_k}(0))^{T}\big)
\oplus
\bigoplus_l 
J_{2\gamma_l-1} (0),\,\,\, \lambda_j\in \mathbb{C}^{*},\nonumber\\
& \label{symmK}
K:=\bigoplus_{j}^{} 
\begin{bsmallmatrix}
0 & I_{\alpha_j}\\
I_{\alpha_j} & 0
\end{bsmallmatrix}
\oplus
\bigoplus_{k}^{} 
\begin{bsmallmatrix}
0 & I_{2\beta_k}\\
I_{2\beta_k} & 0
\end{bsmallmatrix}
\oplus
\bigoplus_l 
\Gamma_{2\gamma_l-1}.
\end{align}
\normalsize
Next, the Jordan canonical form of a $K$-orthogonal matrix $R$ can contain blocks $J_m(\pm 1)$ with $m$ odd, and pairs of block $J_{m}(\lambda)\oplus J_{m}(\lambda^{-1})$, in which either $\lambda\in \mathbb{C}\setminus \{0,\pm 1\}$ or $\lambda=\pm 1$ and $m$ is even.
%
%
%
Its $K$-orthogonal similarity normal form is thus of the form ($\Ima (\varphi_j)\in (0,\pi)\cup (\pi,2\pi) $):
\small
\begin{align*}
\mathcal{O}_{}(R)
= 
& 
\bigoplus_j  
\big(\exp({J_{\alpha_j}(\varphi_j)})\oplus \exp({-(J_{\alpha_j}(\varphi_j))^T}) \big)
\oplus 
\bigoplus_k 
\big( \exp({J_{2\beta_k}(0)}) \oplus \exp({-(J_{2\beta_k}(0)})^T)\big)
\\
&
\oplus\bigoplus_l \exp({J_{2\gamma_l-1}})
 \oplus \bigoplus_m 
-\big( \exp({J_{2\beta_m}(0)}) \oplus \exp({-(J_{2\beta_m}(0)})^T)\big)
\oplus\bigoplus_n  
- \exp({J_{2\epsilon_n-1}}).
\end{align*}
\normalsize

We proceed with a suitably chosen skew-symmetric matrix $J$ (see (\ref{Gm})). The Jordan canonical form of a $J$-Hamiltonian matrix $A$ 
can only contain blocks of the form $J_m(0)$ with $m$ even, and pairs of blocks of the form $J_m(\lambda)\oplus J_m(-\lambda)$,
in which either $\lambda\in \mathbb{C}^{*}$ or $\lambda=0$ and $m$ is odd.
%
%
The $J$-Hamiltonian normal form for $A$ under $J$-symplectic similarity is then of the form:
\small
\begin{align}
\mathcal{H}_{}(A) & =
\bigoplus_{j}^{} 
\big(J_{\alpha_j}(\lambda_j)\oplus -(J_{\alpha_j}(\lambda_j))^{T}\big)
\oplus
\bigoplus_{k}^{} 
\big(J_{2\beta_k-1}(0)\oplus -(J_{2\beta_k-1}(0))^{T}\big)
\oplus
\bigoplus_l 
J_{2\gamma_l},\,\,\lambda_j \in \mathbb{C}^{*},\nonumber
\\
\label{Gm}
& 
J:=\bigoplus_{j}^{} 
\begin{bsmallmatrix}
0 & I_{\alpha_j}\\
-I_{\alpha_j} & 0
\end{bsmallmatrix}
\oplus
\bigoplus_{k}^{} 
\begin{bsmallmatrix}
0 & I_{2\beta_k-1}\\
-I_{2\beta_k-1} & 0
\end{bsmallmatrix}
\oplus
\bigoplus_l
\Gamma_{2\gamma_l}.
\end{align}
\normalsize
Further, the Jordan canonical form of a $J$-symplectic matrix $R$ can 
contain blocks $J_m(\pm 1)$ with $m$ even, and pairs of blocks $J_{m}(\lambda)\oplus J_{m}(\lambda^{-1})$, 
in which either $\lambda\in \mathbb{C}\setminus \{0,\pm 1\}$ or $\lambda=\pm 1$ and $m$ is odd.
%
%
%
The $J$-symplectic similarity normal form is then as follows (and all $\Ima (\varphi_j)\in (0,\pi)\cup (\pi,2\pi) $):
\small
\begin{align*}
\mathcal{S}p_{}(R)
= 
& 
\bigoplus_j  
\big(\exp({J_{\alpha_j}(\varphi_j)})\oplus \exp({-(J_{\alpha_j}(\varphi_j))^T}) \big)
\oplus 
\bigoplus_k 
\big( \exp({J_{2\beta_k-1}(0)}) \oplus \exp({-(J_{2\beta_k-1}(0)})^T)\big)
\nonumber\\
&
\oplus\bigoplus_l \exp({H_{2\gamma_l}})
 \oplus \bigoplus_m 
\big( \exp({J_{2\beta_m-1}(0)}) \oplus \exp({-(J_{2\beta_m-1}(0)})^T)\big)
\oplus\bigoplus_n  
- \exp({H_{2\epsilon_n}}).
\end{align*}
\normalsize
%

Summands $\pm e^{A_j}$ of $H$-orthogonal or $H$-symplectic normal forms are chosen as exponentials of $H$-skew-symmetric or $H$-Hamiltonian normal blocks $A_j$, 
since it is more suitable for our applications. Note that Cayley transforms $(I-A_j)(I+A_j)^{-1}$ is a fine choice as well, provided that all $I+A_j$ are nonsingular.

By transforming $Q^{-1}MQ=M$ 
into the Sylvester equation $\mathcal{J}(M)X=X\mathcal{J}(M)$, with $X=PQP^{-1}$ for some transition matrix $P$ (cf. Proposition \ref{resAoXXA}), 
it follows directly that the centralizers and isotropy groups, 
defined by (\ref{CGR}) and (\ref{SGA}), satisfy the following properties.

\begin{proposition} \label{stabs11}
Let $\mathbb{F}$ be either $\mathbb{C}$ or $\mathbb{R}$, and let $\G \subset \GL_{n}(\mathbb{F})$ with $n=\sum_{j=1}^m n_j$.
For $j\in \{1,\ldots,m\}$ let $\G_j\subset \GL_{n_j}(\mathbb{F})$ and $H_j\in \GL_{n_1}(\mathbb{F})$, 
and set $H:=\bigoplus_{j=1}^m H_j$.
\begin{enumerate}[label= \arabic*.,ref=\arabic*,
leftmargin=20pt]
\item \label{stabs11a}
Let $\pm\lambda_1,\ldots,\pm\lambda_m\in \mathbb{C}$ be pairwise distinct eigenvalues of $A_1,\ldots,A_m$, respectively; $A:=\bigoplus_{j=1}^{m}A_j^{}$. 
Assume that either $\G=\O_n(\mathbb{F},H)$ and 
$H_j^T=H_j$, $\G_j=\O_{n_j}(\mathbb{C},H_j)$,  
$A_j\in \Skew_{n_j}(\mathbb{F},H_j)$ for all $j$,
or $G=\Sp_n(\mathbb{F},H)$ and $H_j^T=-H_j$, $\G_j=\Sp_{n_j}(\mathbb{F},H_j)$, $A_j\in \Ham_{n_j}(\mathbb{F},H_j)$ with $n_j$ even for all $j$.
Then $\Sigma_{\G}(A)=\bigoplus_{j=1}^{m}\Sigma_{\G_j}(A_j)$.
\item \label{stabs11ao}                                                                                                      
Suppose $\lambda_1,\ldots,\lambda_m\in \mathbb{C}$ are pairwise distinct eigenvalues of $R_1,\ldots,R_m$, respectively, where $R_j\in \G_j$ for any $j$; $R=\bigoplus_{j=1}^{m}R_j^{}$.
If either $H_j^T=H_j$, $\G_j=\O_{n_j}(\mathbb{F},H_j)$ for all $j$ and $\G=\O_n(\mathbb{F},H)$ or $H_j^T=-H_j$, $\G_j=\Sp_{n_j}(\mathbb{F},H_{j})$ with $n_j$ even for all $j$ and $\G=\Sp_n(\mathbb{F},H)$, then $\C_{\G}(R)=\bigoplus_{j=1}^{m}\C_{\G}(R_j)$.
\end{enumerate}

\end{proposition}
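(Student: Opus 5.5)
The plan is to reduce both parts to the classical fact that a matrix commuting with a block-diagonal matrix whose blocks have pairwise disjoint spectra must itself be block diagonal. Once $Q$ is known to be block diagonal, the group condition splits blockwise, and the stated decomposition follows.

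\textbf{Part \ref{stabs11a}.} Let $Q\in\G$ satisfy $Q^{-1}AQ=A$, that is, $AQ=QA$. Partition $Q=[Q_{ij}]_{i,j=1}^m$ conformally with $A=\bigoplus_j A_j$. Comparing blocks gives the Sylvester equations
\[
A_iQ_{ij}=Q_{ij}A_j,\qquad i,j\in\{1,\ldots,m\}.
\]
By hypothesis the spectrum of $A_j$ is contained in $\{\lambda_j,-\lambda_j\}$, and the sets $\{\pm\lambda_i\}$ and $\{\pm\lambda_j\}$ are disjoint for $i\neq j$. Hence $\sigma(A_i)\cap\sigma(A_j)=\emptyset$ for $i\ne j$, and the uniqueness of solutions of the Sylvester equation forces $Q_{ij}=0$.

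For a self-contained argument, write $p$ for the characteristic polynomial of $A_j$. Then $p(A_i)Q_{ij}=Q_{ij}p(A_j)=0$, and $p(A_i)$ is nonsingular because no eigenvalue of $A_i$ is a root of $p$. So $Q_{ij}=0$.

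Thus $Q=\bigoplus_j Q_j$. The relation $Q^THQ=H$ with $H=\bigoplus_j H_j$ then splits as $Q_j^TH_jQ_j=H_j$, so $Q_j\in\G_j$. Moreover $Q_j$ commutes with $A_j$, so $Q_j\in\Sigma_{\G_j}(A_j)$.

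Conversely, any direct sum of elements $Q_j\in\Sigma_{\G_j}(A_j)$ lies in $\G$ and commutes with $A$. This gives equality. The same argument applies verbatim in the orthogonal and symplectic cases, since only the block structure of $H$ is used.

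\textbf{Part \ref{stabs11ao}.} The argument is identical. From $R_iQ_{ij}=Q_{ij}R_j$ with $\sigma(R_i)\cap\sigma(R_j)=\emptyset$ for $i\neq j$, we again get $Q_{ij}=0$. Then $Q=\bigoplus_j Q_j$ with $Q_j\in\G_j$ and $Q_jR_j=R_jQ_j$, so $Q_j\in\C_{\G_j}(R_j)$. The statement writes $\C_{\G}(R_j)$, which I read as $\C_{\G_j}(R_j)$. The reverse inclusion is immediate.

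\textbf{Interpreting the hypothesis.} The main point needing care is how to read the eigenvalue hypothesis. In part \ref{stabs11a}, "$\pm\lambda_j$ are the eigenvalues of $A_j$" must mean that $\sigma(A_j)\subseteq\{\lambda_j,-\lambda_j\}$; this is all that $H_j$-skew-symmetry or $H_j$-Hamiltonicity allows for a block with a single eigenvalue pair. "Pairwise distinct" must mean that these pairs are disjoint for different $j$. In part \ref{stabs11ao}, the analogous reading is that the spectra of the $R_j$ are pairwise disjoint; for $H_j$-orthogonal or $H_j$-symplectic blocks, naturally $\sigma(R_j)\subseteq\{\lambda_j,\lambda_j^{-1}\}$.

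With this interpretation fixed, no real obstacle remains. In particular, nothing specific to the field is used, so the argument works for $\mathbb{F}=\mathbb{R}$ as well: the Sylvester uniqueness argument via $p(A_i)$ works over any field.
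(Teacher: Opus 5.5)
Your proof is correct and follows the same approach as the paper. The paper likewise turns $Q^{-1}AQ=A$ (resp.\ $Q^{-1}RQ=R$) into a homogeneous Sylvester equation and uses that its solutions are block diagonal when the spectra are disjoint (Proposition \ref{resAoXXA} (\ref{resAoXXA1}), after passing to Jordan form); the condition $Q^THQ=H$ then splits blockwise. Your Cayley--Hamilton argument proves that block-diagonality directly, without the Jordan reduction, and your readings of the hypotheses are the right ones: part 2 genuinely needs the full spectra $\sigma(R_j)$ to be pairwise disjoint, and $\C_{\G}(R_j)$ should be $\C_{\G_j}(R_j)$.
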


To present a detailed description of isotropy groups we use matrices of a special block structure. Given $\alpha=(\alpha_1,\ldots,\alpha_N)$ with 
$\alpha_{1}>\ldots >\alpha_{N}$ 
and $\mu=(m_1,\ldots,m_N)$, 
let $\mathbb{T}^{\alpha,\mu}$ be formed of all nonsingular
$N$-by-$N$ block matrices with \emph{rectangular upper triangular Toeplitz} blocks of size $\alpha_r$-by-$\alpha_s$:
\begin{align}\label{0T0}
\mathcal{X}=[\mathcal{X}_{rs}]_{r,s=1}^{N}\in \mathbb{T}^{\alpha,\mu},\quad
&\mathcal{X}_{rs}=
\left\{
\begin{array}{ll}
\hspace{-2mm}[0\quad \mathcal{T}_{rs}], & \hspace{-1mm} \alpha_r<\alpha_s\\
\hspace{-2mm}\begin{bmatrix}
\mathcal{T}_{rs}\\
0
\end{bmatrix}, & \hspace{-1mm} \alpha_r>\alpha_s\\
\hspace{-2mm}\mathcal{T}_{rs},& \hspace{-1mm} \alpha_r=\alpha_s
\end{array}\right. \hspace{-1mm}, \quad
\mathcal{T}_{rs}=T(A_0^{rs},\ldots,A_{b_{rs}-1}^{rs}),
\end{align}
where 
$b_{rs}:=\min\{\alpha_s,\alpha_r\}$, $A_0^{rr}\in \GL_{m_r}(\mathbb{F})$, $A_j^{rs}\in \mathbb{F}^{m_r\times m_s}$. 
By $ \mathbb{F}^{m\times n}$ we denote the set of $m$-by-$n$  complex matrices, and 
let a block \emph{upper triangular Toeplitz} matrix be
\begin{align}\label{UTT}
T(A_0,\ldots,A_{n-1}):=\begin{bmatrix}
  A_{0} & A_{1}                       & \ldots &    A_{n-1}  \\
0       & \ddots   &  \ddots    & \vdots \\
 \vdots &   \ddots           & \ddots   &   A_1 \\
0              & \ldots  &  0      & A_0
\end{bmatrix},
\end{align}
where $A_0 ,\ldots,A_{n-1}$ are of the same size, $T(A_0,\ldots,A_{n-1})=[T_{jk}]_{j,k=1}^{n}$ with $T_{jk}=0$ for $j>k$ and $T_{(j+1)(k+1)}=T_{jk}$. When $A_0$ is the identity matrix, (\ref{UTT}) is upper \emph{unitriangu\-lar} Toeplitz. For example, (\ref{0T0}) for $N=\alpha_2=2$, $\alpha_1=3$ is of the form:
\small
\[
\begin{bmatrix}[ccc|cc]
A & B & C &  F  &  G \\ 
0   & A & B     &  0  &   F \\
0   & 0   & A     &  0 & 0  \\ 
\hline
0     & H & J                         &  D  &  E \\  
0    & 0   & H              &  0    &   D    
\end{bmatrix}.
\]
\normalsize

The group structure of these matrices in (\ref{0T0}) is as follows.

\begin{proposition}\label{lemanilpo}
(\cite[Lemma 2.2]{TSOS})
The set $\mathbb{T}^{\alpha,\mu}$ defined by (\ref{0T0}) is a group.
Moreover, $\mathbb{T}^{\alpha,\mu}=\mathbb{D}\ltimes \mathbb{U}$ is a semidirect product a subgroup $\mathbb{D}\subset \mathbb{T}^{\alpha,\mu}$ containing nonsingular block diagonal matrices, and a unipotent normal subgroup $\mathbb{U}\subset \mathbb{T}^{\alpha,\mu}$ consisting of all matrices having upper unitriangular Toeplitz diagonal blocks.
\end{proposition}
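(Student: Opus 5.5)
The plan is to identify $\mathbb{T}^{\alpha,\mu}$, up to a fixed permutation similarity, with the invertible elements of a matrix algebra. The natural candidate is the centralizer of a nilpotent Jordan matrix. Put $N_\alpha:=\bigoplus_{r=1}^N \big(J_{\alpha_r}(0)\otimes I_{m_r}\big)$, where in the $r$-th diagonal block the entries are arranged so that the $(j,k)$ entry is the $m_r\times m_s$ block. That is, we use the block-Toeplitz ordering of (\ref{UTT}), with $J_{\alpha_r}(0)$ acting on the block index.

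First I will verify the classical description of the commutant. A block matrix $X=[X_{rs}]$ satisfies $N_\alpha X=XN_\alpha$ if and only if $J_{\alpha_r}(0)X_{rs}=X_{rs}J_{\alpha_s}(0)$ for all $r,s$. This is to be read with $m_r\times m_s$ blocks as entries, i.e. as an intertwining relation for $J_{\alpha_r}(0)\otimes I_{m_r}$ and $J_{\alpha_s}(0)\otimes I_{m_s}$. Comparing entries shows that $X_{rs}$ is constant along diagonals, and that it vanishes below the appropriate diagonal and in the extra rows or columns. This is exactly the shape $[0\ \mathcal{T}_{rs}]$, $\begin{bmatrix}\mathcal{T}_{rs}\\0\end{bmatrix}$ or $\mathcal{T}_{rs}$ in (\ref{0T0}), with $\mathcal{T}_{rs}$ of size $b_{rs}$ block-Toeplitz. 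Hence the set $\mathcal{A}$ of all matrices of the form (\ref{0T0}), without the invertibility condition, equals the commutant of $N_\alpha$. It is therefore a unital subalgebra of $\mathbb{F}^{n\times n}$, closed under sums and products.

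The group property then follows from a standard fact. If $X\in\mathcal{A}$ is invertible, then $X^{-1}$ also commutes with $N_\alpha$, so $X^{-1}\in\mathcal{A}$. Alternatively, one can use Cayley--Hamilton to write $X^{-1}$ as a polynomial in $X$. The remaining point is that the condition in (\ref{0T0}), $A_0^{rr}\in\GL_{m_r}$, is equivalent to nonsingularity of $X$. For this I introduce the map $\pi\colon\mathcal{A}\to\bigoplus_r\mathbb{F}^{m_r\times m_r}$, $X\mapsto (A_0^{11},\dots,A_0^{NN})$.

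The main obstacle is showing that $\pi$ is an algebra homomorphism. Since $\alpha_1>\dots>\alpha_N$ are distinct, every off-diagonal block $X_{rs}$ is strictly nilpotent in the appropriate sense, because of the zero padding. Concretely, I will take the product $(XY)_{rr}=\sum_s X_{rs}Y_{sr}$ and compute its $(1,1)$ entry, namely the entry in the first row of $X_{rs}$ times the entry in the first column of $Y_{sr}$. For $s\ne r$ this entry is zero. If $\alpha_s>\alpha_r$, the first column of $Y_{sr}=\begin{bmatrix}\mathcal{T}\\0\end{bmatrix}$ is $(B_0,0,\dots)^T$, but $X_{rs}=[0\ \mathcal{T}]$ has zero in the first $\alpha_s-\alpha_r\ge1$ columns. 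The case $\alpha_s<\alpha_r$ is symmetric. So $\pi(XY)=\pi(X)\pi(Y)$. The kernel of $\pi$ consists of matrices which, after a permutation ordering rows and columns by "height" in the Jordan chains, are strictly block upper triangular, hence nilpotent. I would phrase this via the grading by powers of $N_\alpha$, or via the filtration $\ker N_\alpha^k$, which every element of $\mathcal{A}$ preserves. It follows that $X$ is invertible if and only if $\pi(X)$ is invertible, since $X=D+M$ with $D$ block diagonal invertible and $M$ in the nilpotent ideal $\ker\pi$.

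For the semidirect product, let $\mathbb{D}$ consist of the block diagonal matrices $\bigoplus_r (I_{\alpha_r}\otimes A_0^{rr})$ in our arrangement. It is a subgroup isomorphic to $\prod_r\GL_{m_r}$, and it is contained in $\mathbb{T}^{\alpha,\mu}$. Set $\mathbb{U}:=\pi^{-1}(I)\cap\mathbb{T}^{\alpha,\mu}$. It is normal, being the kernel of the homomorphism $\pi$ restricted to the group. It is unipotent, since $X-I\in\ker\pi$ is nilpotent. Its elements are exactly the matrices with upper unitriangular Toeplitz diagonal blocks. Finally, $\mathbb{D}\cap\mathbb{U}=\{I\}$, and every $X$ factors as $X=D\cdot(D^{-1}X)$ with $D$ the element of $\mathbb{D}$ having $\pi(D)=\pi(X)$. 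This gives $\mathbb{T}^{\alpha,\mu}=\mathbb{D}\ltimes\mathbb{U}$.
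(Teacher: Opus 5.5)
Your argument is correct and is essentially the one the paper relies on through the cited \cite[Lemma 2.2]{TSOS} and the machinery of Lemma \ref{lemaP} and Proposition \ref{resAoXXA}: up to the permutation $\Omega$, the matrices of the form (\ref{0T0}) are exactly the solutions of the homogeneous Sylvester equation for a nilpotent Jordan matrix, so $\mathbb{T}^{\alpha,\mu}$ is a centralizer group, and the splitting $\mathbb{D}\ltimes\mathbb{U}$ comes from the homomorphism $\mathcal{X}\mapsto(A_0^{11},\ldots,A_0^{NN})$. The one step to tighten is the nilpotency of $\ker\pi$: ordering by height alone does not make these matrices strictly triangular, because the blocks $A_0^{rs}$ with $r<s$ preserve height, so you must also order by $r$ within each height level (equivalently, check that the map induced on $\ker N_\alpha^{k}/\ker N_\alpha^{k-1}$ is block upper triangular in $(r,s)$ with diagonal blocks $A_0^{rr}$), which is the same zero-padding computation you already did for the multiplicativity of $\pi$.
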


Furthermore, additional conditions on $\mathcal{X}\in \mathbb{T}^{\alpha,\mu}$ in (\ref{0T0}) may be required:
\begin{enumerate}[label={\bf (\Roman*)},ref={\Roman*},wide=0pt,itemsep=4pt]
\item \label{stabs0} 
The nonzero entries of $\mathcal{X}_{rs}$ for $r>s$ are taken freely, and
$(\mathcal{X}_{rr})_{11}$ is 
$B_r$-or\-tho\-gonal ($B_r$-symplectic) for a given nonsingular symmetric (skew-symmetric) $B_r$.

\item \label{stabs4} 
For $j\geq 1$ and any $r$ we have 
$(\mathcal{X}_{rr})_{1(1+j)}:=A_{j}^{rr}=A_{0}^{rr} B_{r}^{-1}Z_{j}^{r}+D_{j}^{r}$, in which 
$A_{0}^{rr},B_{r}$ are as in (\ref{stabs0}),
$Z_{j}^{r}=
\pm (Z_j^{r})^{T}$
is chosen arbitrarily, and $D_j^{r}$ depends polynomially on $A_{j'}^{r'r'}$ with $j'\in \{0,\ldots,j-1\}$, $r'\in \{1,\ldots,r\}$ and on the entries of $\mathcal{X}_{rs}$ for $r>s$. 

\item \label{stabs2} 
The nonzero entries of $\mathcal{X}_{rs}$ for $r,s\in \{1,\ldots,N\}$ with $r<s$ are uniquely determined (polynomially) by the entries of $\mathcal{X}_{rs}$ with $r\geq s$ (described above).
\end{enumerate}

The following significant examples satisfy the above conditions.

\begin{example} \label{exW} (cf. \cite[Example 3.1]{TSOS})
Fix $c\in \{1,2\}$ and suppose $\alpha_{1}>\ldots >\alpha_{N}$,
$m_1,\ldots,m_N\in \mathbb{N}$. For $r\in \{1,\ldots,N\}$ let $B_{r}$ be nonsingular matrices with
$B_{r}^{T}=(-1)^{\alpha_r+c}B_{r}$, and let 
$Z_{j}^{r}$ 
satisfy $(Z_j^{r})^{T}=(-1)^{\alpha_r-j+c+1}Z_{r}$; all matrices are of size $m_r\times m_r$.
Define
\small
\begin{align}\label{asZ}
&\mathcal{W}=\bigoplus_{r=1}^{N}T(I_{m_r},W_1^{r},\ldots,W_{\alpha_r-1}^{r}),\qquad
W_{j}^{r}:=B_{r}^{-1}\big(Z_{j}^{r}-\frac{1}{2}\sum_{k=1}^{j-1}(-1)^k(W_k^{r})^{T}B_{r}W_{j-k}^{r}
\big).
\end{align}
\end{example}
\normalsize

\begin{example}
(cf. \cite[Example 3.2]{TSOS}) Assume $c\in \{1,2\}$ and $1\leq p< t\leq N$ with $\alpha_t<\alpha_p$, and let $B_{r}\in \GL_{m_r}(\mathbb{F})$ with $B_r^T=(-1)^{\alpha_r+c}B_{r}$ for $r\in \{p,t\}$,
while $F \in \mathbb{F}^{m_t\times m_p}$ is an arbitrary matrix. We set a matrix of the form (\ref{0T0}), having the identity as a principal subma\-trix formed of all blocks except those at the $p$-th and the $t$-th columns and rows:
\small
\begin{align}
\label{Hptk}
&
\mathcal{T}_{rs}=\left\{
\begin{array}{ll}
\hspace{-1mm}\bigoplus_{j=1}^{\alpha_r}I_{m_r}, &  r=s\\
\hspace{-1mm}0,                        &  r\neq s 
\end{array}
\right. \hspace{-1mm}, \quad \{r,s\}\not\subset\{p,t\},\qquad\,
\mathcal{T}_{tp}=
N_{\alpha_t}^{k}(F ),\quad 0\leq k \leq \alpha_t-1,\\ 
&
\mathcal{T}_{rr}=T(I_{m_r},V_1^{r},\ldots,V_{\alpha_r-1}^{r}), \quad r\in \{p,t\},\qquad\quad\quad
\mathcal{T}_{pt}=
N_{\alpha_t}^{k}\big(-B_{p}^{-1}F^{T}B_{t}\big),\nonumber
\end{align}
%
\small
\[
\begin{array}{l l}
\hspace{-3mm}\begin{array}{l}
V_{j}^{p}:=\left\{\begin{array}{ll}
\hspace{-2mm}a_{n-1}B_{p}^{-1}(F^{T}B_{t}FB_{p}^{-1})^{n}B_p^{}, & \hspace{-2mm} j=n(2k+\alpha_t-\alpha_p)\\
\hspace{-2mm}0,                      & \hspace{-2mm} \textrm{otherwise}
\end{array}
\right.\hspace{-1mm}
\\
V_{j}^{t}:=\left\{\begin{array}{ll}
\hspace{-2mm} a_{n-1}B_{t}^{-1}(B_{t}FB_{p}^{-1}F^{T})^{n}B_{t}, & \hspace{-2mm} j=n(2k+\alpha_t-\alpha_p)\\
\hspace{-2mm} 0,                      & \hspace{-2mm} \textrm{otherwise}
\end{array}
\right.\hspace{-1mm}
%
\end{array}
&                                               
\hspace{-6.0mm}
a_{n}:=
\left\{
\begin{array}{ll}
\hspace{-2mm} -\frac{1}{2^{2n+1}}C(n), & \hspace{-2.5mm} \alpha_p-\alpha_t \textrm{ even}\\
\hspace{-2mm} \frac{1}{4^{n}}(-1)^{\frac{n+1}{2}}C(\frac{n-1}{2}), & \hspace{-2.5mm} n,\alpha_p-\alpha_t \textrm{ odd}\\
\hspace{-2mm} 0, &  
\hspace{-2.5mm}\textrm{otherwise},
\end{array}
\right.
\end{array}
\]
\normalsize
Here $N_{\alpha}^{k}(X)$ denotes an $\alpha\times \alpha$ matrix with $X$ on the $k$-th super-diagonal (the main diago\-nal for $k=0$) and zeros elsewhere, and 
$C(n)={\frac{1}{n+1}} {\binom{2n}{n}}$ are Catalan numbers.
If $N=2$, $\alpha_1=4$, $\alpha_2=2$, $k=1$, $c=0$, 
$B_{1}=B_{2}=I$, then:
\small
\begin{equation}\label{exXC}
\begin{bmatrix}[cccc|cc]
I_{} & 0 & 0 & -\tfrac{1}{2}F^{T}F  &  0  &  -F^{T} \\
0   & I_{} & 0   & 0  &  0  &   0\\
0   & 0   & I_{} & 0    &  0 & 0 \\
0   & 0   &  0  &  I_{}   & 0 &0 \\
\hline
0   & 0   & 0 & F                         &  I  & 0  \\
0   & 0   & 0   & 0              &  0    &   I  \\
\end{bmatrix}.
\end{equation}
\normalsize
\end{example}
\normalsize

We state our main result, which we prove in Sec.\ref{sec2}. In comparison with \cite[Theorem 2.3]{TSOC}, \cite[Theorem 3.2]{TSOS}, several subtle differences occur, and in addition the second theorem exhibits a new phenomena in the structure of the groups.

\begin{theorem}\label{stabw2} 
Assume $\alpha=(\alpha_1,\ldots,\alpha_N)$ with $\alpha_{1}>\ldots >\alpha_{N}$,
$m_1,\ldots,m_N\in \mathbb{N}$ and fix $c\in \{1,2\}$. Set $\mu=(\mu_{1},\ldots,\mu_{N})$ with
$\mu_{r}:=
\left\{
\begin{array}{ll}
\hspace{-2mm}m_r, &  \hspace{-1mm} c+\alpha_r \textrm{ even}\\
\hspace{-2mm}2m_r, & \hspace{-1mm} c+\alpha_r \textrm{ odd}
\end{array}
\right.\hspace{-2mm} $, $n:=\sum_{r=1}^m \alpha_r \mu_{r}$, 
and let $\mathbb{T}^{\alpha,\mu}$ be as in (\ref{0T0}), while $\Gamma_{\alpha_r}$ is as in (\ref{Jblock}).
Furthermore, let $\varepsilon\in \{-1,1\}$ and define
\begin{align}\label{defMH}
&\mathcal{A}^{}=
\bigoplus_{r=1}^{N}\big(\bigoplus_{j=1}^{m_r} A_{r}\big), \,\,\,\,\, A_{r}:=
\left\{
\begin{array}{ll}
\hspace{-1mm}J_{\alpha_r}(0)\oplus -(J_{\alpha_r}(0))^T,   &  \hspace{-1mm}  
c + \alpha_r \textrm{ odd}\\
\hspace{-1mm} J_{\alpha_r}(0),      &   \hspace{-1mm} 
c + \alpha_r \textrm{ even}
\end{array}
\right.\hspace{-2mm}, \qquad
\mathcal{R}=\varepsilon e^{\mathcal{A}}, \\
 \nonumber
\G= &
\left\{
\begin{array}{ll}
\hspace{-1.5mm}
\O_{n}(\mathbb{F},H),   &  \hspace{-1.5mm} 
c =1\\
\hspace{-1.5mm}
\Sp_{n}(\mathbb{F},H),      &   \hspace{-1.5mm} 
c =2
\end{array}
\right.\hspace{-2mm}
,\quad\, H=\bigoplus_{r=1}^{N}\big( \bigoplus_{j=1}^{m_r} 
H_{r}\big),
\,\,\,\, H_r:=
\left\{
\begin{array}{ll}
\hspace{-1.5mm}
\begin{bsmallmatrix}
0 & I_{\alpha_r}\\
(-1)^{c+1} I_{\alpha_r} & 0
\end{bsmallmatrix} ,   &  \hspace{-1.5mm} 
c + \alpha_r \textrm{ odd}\\
\hspace{-1.5mm}
\Gamma_{\alpha_r},      &   \hspace{-1.5mm} 
c + \alpha_r \textrm{ even}
\end{array}
\right.\hspace{-2mm}.
\end{align}
Then the centralizer of $\mathcal{R}$ and the isotropy group of $\mathcal{A}$ (under $\G$-similarity action) coincide, and 
%
%
they admit a semidirect product decomposition with the properties below:
\begin{align}\label{CSgroup}
\Sigma_{\G}({\mathcal{A}^{}_{}})=\C_{\G}({\mathcal{R}^{}_{}})= \L(\C_{\G}({\mathcal{R}^{}_{}}))  \ltimes \R_u (\C_{\G}({\mathcal{R}^{}_{}})). 
\end{align}
\begin{enumerate}
\item
The subgroup $\L(\C_{\G}({\mathcal{R}^{}_{}}))$ is conjugate (via some matrix $\Psi_G$) to a direct sum: 
\begin{align*}
\L(\C_{\G}({\mathcal{R}^{}_{}})) \cong \bigoplus_{r=1}^{N} \big(\bigoplus_{j=1}^{\alpha_r} \G_{r}\big)=:\mathbb{O}\subset \mathbb{T}^{\alpha,\mu}, \quad\,\, 
G_{r}:= \left\{
\begin{array}{ll}
\hspace{-1mm} 
\O_{m_r}(\mathbb{F}), & \hspace{-1mm}  c+\alpha_r \textrm{ even}\\
\hspace{-1mm} 
\Sp_{2m_r}(\mathbb{F}), & \hspace{-1mm}  c+\alpha_r \textrm{ odd}
\end{array}
\right.\hspace{-2mm},
\end{align*}
%
%
\item
The unipotent radical $\R_u(\C_{\G}({\mathcal{R}^{}_{}}))$ 
is conjugated (via $\Psi_G$) to a normal subgroup $\mathbb{U}\subset \mathbb{T}^{\alpha,\mu}$, generated by matrices defined by (\ref{asZ}) and 
(\ref{Hptk}) for 
\small
$B_r := \left\{
\begin{array}{ll}
\hspace{-1mm} (-1)^{c} I_{m_r}, & c+\alpha_r \textrm{ even}\\
\hspace{-1mm} 
\begin{bsmallmatrix}
0 & I_{m_r}\\
-I_{m_r} & 0
\end{bsmallmatrix}, & c+\alpha_r \textrm{ odd}
\end{array}
\right.\hspace{-2mm}$.
\normalsize
\end{enumerate}

Moreover, there exists an algorithm for computating the centralizers (isotropy gro\-ups), appearing in (\ref{CSgroup}), and these are conjugate (via $\Psi_G$) 
to a subgroup $\mathbb{X}:=\mathbb{O} \ltimes \mathbb{U} \subset \mathbb{T}^{\alpha,\mu}$,
where every $\mathcal{X}\in \mathbb{X}$ satisfies conditions (\ref{stabs0}), (\ref{stabs4}), (\ref{stabs2}) with $B_r$ as above, and
%
\begin{align*}
\dim_{\mathbb{F}} (\mathbb{X})
=
\sum_{r=1}^{N}\big(\tfrac{1}{2} \alpha_r \mu_{r}^{2}+\sum_{s=1}^{r-1} \alpha_s \mu_{r} \mu_{s}\big) 
+(-1)^{c}\sum_{\alpha_r \textrm{ odd}}\tfrac{1}{2} \mu_{r}.
\end{align*}
\normalsize
(If $\mathbb{F}=\mathbb{R}$, then $H$ is real congruent to $\pm I_n$ precisely when $N=1$, $\alpha_1=1$, i.e. $\mathcal{A}=0$, $\mathcal{R}=\pm I_n$; 
and in this case $\C_{\O_n(\mathbb{R})}(\pm I_n)=\Sigma_{\O_n(\mathbb{R})}(0)=\O_n(\mathbb{R})$.)
\end{theorem}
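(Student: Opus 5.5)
The plan is to reduce the problem to the linear-algebraic description of matrices commuting with a nilpotent Jordan form, and then impose the $H$-orthogonal/$H$-symplectic condition as a system of equations on the Toeplitz blocks.

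\emph{Step 1: reduction to $\mathcal{A}$.} Since $\mathcal{A}$ is nilpotent and $\mathcal{R}=\varepsilon e^{\mathcal{A}}$, (\ref{eqAexp}) gives $\Sigma_{\G}(\mathcal{A})=\C_{\G}(\mathcal{R})$. The reason is that $\mathcal{A}=\log(\varepsilon\mathcal{R})$ is a polynomial in $\mathcal{R}$, so anything commuting with $\mathcal{R}$ commutes with $\mathcal{A}$, and conversely. It therefore suffices to study $Q\in\G$ with $Q\mathcal{A}=\mathcal{A}Q$.

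\emph{Step 2: conjugation to $\mathbb{T}^{\alpha,\mu}$.} Each summand $J_{\alpha_r}(0)\oplus-(J_{\alpha_r}(0))^T$ is similar to $J_{\alpha_r}(0)\oplus J_{\alpha_r}(0)$ via $I\oplus \mathrm{diag}((-1)^{j})\,\cdot$ flip. Hence there is a permutation-type matrix $P$ with $P\mathcal{A}P^{-1}=\bigoplus_r J_{\alpha_r}(0)\otimes I_{\mu_r}$, written with blocks grouped so that a standard computation (Proposition \ref{resAoXXA}, as in \cite{TSOS}) shows the commutant is exactly the algebra of matrices of form (\ref{0T0}). The invertible ones form $\mathbb{T}^{\alpha,\mu}$ (Proposition \ref{lemanilpo}). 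Setting $\Psi_G=P$ and $X=PQP^{-1}$, the condition $Q^THQ=H$ becomes $X^T\widetilde H X=\widetilde H$ with $\widetilde H=P^{-T}HP^{-1}$. I choose $P$ so that $\widetilde H=\bigoplus_r \widetilde\Gamma_{\alpha_r}\otimes B_r$, where $\widetilde\Gamma_{\alpha_r}$ is the anti-diagonal sign matrix and $B_r$ is as in the theorem. This is the reason $B_r$ is $(-1)^cI$ or the standard $J$ depending on the parity of $c+\alpha_r$.

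\emph{Step 3: solving $X^T\widetilde HX=\widetilde H$ blockwise.} I expand the equation in the $(r,s)$ blocks. Since products of upper-triangular Toeplitz blocks with the anti-diagonal $\widetilde\Gamma$ are Hankel-like, each block equation reduces to equations on the Toeplitz symbols $A^{rs}_j$. I order these equations by the pair (block index, super-diagonal index $j$).
\begin{itemize}
\item The diagonal block at $j=0$ gives $(A_0^{rr})^TB_rA_0^{rr}=B_r$, which is condition (\ref{stabs0}).
\item The diagonal block at $j\ge1$ gives $(A_0^{rr})^TB_rA_j^{rr}\pm(A_j^{rr})^TB_rA_0^{rr}=$ (terms of lower order or from off-diagonal blocks). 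Its general solution is $A_j^{rr}=A_0^{rr}B_r^{-1}Z^r_j+D^r_j$ with $Z^r_j$ (skew-)symmetric of the parity forced by $\widetilde\Gamma$, which is condition (\ref{stabs4}).
\item The $(r,s)$ equation with $r<s$ is linear in $\mathcal{X}_{rs}$, with invertible coefficient $A_0^{rr}$. It determines $\mathcal{X}_{rs}$ from the blocks below the diagonal, which is condition (\ref{stabs2}).
\end{itemize}
This recursion is the algorithm. Counting free parameters gives the dimension. Free lower blocks contribute $\alpha_s\mu_r\mu_s$ for each $s<r$. Each diagonal block contributes $\dim\G_r$ plus the dimensions of the spaces of admissible $Z_j^r$, and these alternate between $\mu_r(\mu_r\pm1)/2$. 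Summing over $j$ gives $\tfrac12\alpha_r\mu_r^2$, plus the correction $(-1)^c\tfrac12\mu_r$ when $\alpha_r$ is odd.

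\emph{Step 4: group structure.} The block diagonal part with $A_j^{rr}=0$ for $j\ge1$ and zero off-diagonal blocks satisfies the equation exactly when each $A_0^{rr}\in\G_r$, which gives $\mathbb{O}$. Intersecting with the unipotent normal subgroup $\mathbb{U}$ of Proposition \ref{lemanilpo} gives the radical, and the decomposition $\mathbb{D}\ltimes\mathbb{U}$ restricts to $\mathbb{O}\ltimes(\mathbb{U}\cap \text{solutions})$. To show this radical is generated by the matrices (\ref{asZ}) and (\ref{Hptk}), I verify directly that these solve the equation; the Catalan coefficients come from solving $V$ recursively, in the manner of a square root. I then factor an arbitrary unipotent solution by peeling off its lower blocks one super-diagonal at a time using (\ref{Hptk}), and finally its diagonal blocks using (\ref{asZ}). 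The real case remark follows because $H$ is indefinite unless $\alpha_1=1$.

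\textbf{Main obstacle.} I expect the hardest part to be Step 3's bookkeeping of signs when $c+\alpha_r$ is odd, since there the blocks are doubled and $B_r$ is symplectic. A closely related difficulty is verifying the generation claim in Step 4, which requires the explicit check that (\ref{Hptk}) satisfies the quadratic equation.
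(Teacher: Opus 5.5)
Your plan is essentially the paper's proof. Your $P$ is the paper's $\Psi=\Phi^{T}\Omega^{T}U$, with $U_r=I_{\alpha_r}\oplus\Gamma_{\alpha_r}$ when $c+\alpha_r$ is odd. Your $\widetilde H$ is $\mathcal{F}\mathcal{B}$, so $X^{T}\widetilde H X=\widetilde H$ is equation (\ref{eqFYFIY}) with $\mathcal{C}=\mathcal{B}$. Step 3 is then Lemma \ref{EqT} and Step 4 is Lemma \ref{lemauni}.

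There are two small differences. Your Step 1 uses the fact that $\mathcal{A}=\log(\varepsilon\mathcal{R})$ and $\mathcal{R}=\varepsilon e^{\mathcal{A}}$ are polynomials in each other. This is a cleaner substitute for the paper's detour through Lemma \ref{resAoXXAo}. Your argument that $\mathbb{X}=\mathbb{O}\ltimes(\mathbb{X}\cap\mathbb{U})$ via the homomorphism $\mathcal{X}\mapsto(A_0^{rr})_r$ is also more explicit than what the paper writes. On the other hand, you only assert that $P$ can be chosen so that $\widetilde H$ has the stated form. That is exactly the computation the paper carries out: the factorization $(U^{-1})^{T}HU^{-1}=ED$, followed by the permutations $\Omega$ and $\Phi$. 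You still owe it.

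One step is wrong as written: the parameter count in Step 3. For $r>s$ we have $\alpha_r<\alpha_s$, so $\mathcal{X}_{rs}=[0\ \ \mathcal{T}_{rs}]$, where $\mathcal{T}_{rs}$ has $b_{rs}=\min\{\alpha_r,\alpha_s\}=\alpha_r$ coefficients in $\mathbb{F}^{\mu_r\times\mu_s}$. A free lower block therefore contributes $\alpha_r\mu_r\mu_s$, not $\alpha_s\mu_r\mu_s$. The honest count gives
\[
\dim_{\mathbb{F}}\mathbb{X}=\sum_{r=1}^{N}\Big(\tfrac12\alpha_r\mu_r^{2}+\sum_{s=1}^{r-1}\alpha_r\mu_r\mu_s\Big)+(-1)^{c}\sum_{\alpha_r\ \mathrm{odd}}\tfrac12\mu_r .
\]
This is the classical $\tfrac12\sum_{r,s}\min\{\alpha_r,\alpha_s\}\mu_r\mu_s\pm\tfrac12\#\{\text{odd parts}\}$.

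The displayed formula in the statement has $\alpha_s$ in place of $\alpha_r$, and so do (\ref{cdim}) and the last lines of the proof of Lemma \ref{EqT}. This is a misprint. Take $c=1$, $\alpha=(3,1)$, $m_1=m_2=1$, the regular nilpotent of $\mathfrak{so}_4$. The free parameters are the single entry $g$ of $\mathcal{X}_{21}=[0\ 0\ g]$ and the $1\times 1$ symmetric $Z_1^{1}$; $Z_2^{1}$ is $1\times1$ skew, hence $0$, and $A_0^{11},A_0^{22}=\pm1$. So $\dim\mathbb{X}=2$, the rank of $\mathfrak{so}_4$, while the displayed expression gives $2+3-1=4$.

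So do not bend the count to reach the displayed formula; state and prove the corrected one. The rest of your count is right: each diagonal block gives $\tfrac12\alpha_r\mu_r^2$, plus $(-1)^c\tfrac12\mu_r$ when $\alpha_r$ is odd.
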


\begin{theorem}\label{stabw1}
For $\alpha=(\alpha_1,\ldots,\alpha_N)$ with $\alpha_{1}>\ldots >\alpha_{N}$, 
$\mu=(m_1,\ldots,m_N)$, set $n:=2\sum_{r=1}^m \alpha_r m_r$, and let $\mathbb{T}^{\alpha,\mu}$ be as in (\ref{0T0}) with $\mathbb{F}=\mathbb{C}$.
Fix $c\in \{1,2\}$ and define further
\begin{align*}
& 
\mathcal{A}^{}(\lambda):=\bigoplus_{r=1}^{N}\Big(\bigoplus_{j=1}^{m_r}\big(J_{\alpha_r}(\lambda)\oplus -(J_{\alpha_r}(\lambda))^T\big)\Big), \,\,\, \lambda\in \mathbb{C}^{*},\quad\,\,
\mathcal{R}(\widetilde{\lambda}):=e^{\mathcal{A}(\widetilde{\lambda})}, \,\,\, 
\widetilde{\lambda} \in \mathbb{C}\setminus\{i k\pi\}_{k\in \mathbb{Z}},\\
&
G= 
\left\{
\begin{array}{ll}
\hspace{-1.5mm}
\O_{n}(\mathbb{C},H),   &  \hspace{-1.5mm} 
c =1\\
\hspace{-1.5mm}
\Sp_{n}(\mathbb{C},H),      &   \hspace{-1.5mm} 
c =2
\end{array}
\right.\hspace{-2mm}, \qquad 
H=\bigoplus_{r=1}^{N} \Big( \bigoplus_{j=1}^{\alpha_r} 
\begin{bsmallmatrix}
0 & I_{\alpha_r}\\
 (-1)^{c+1} I_{\alpha_r} & 0
\end{bsmallmatrix}\Big).
\end{align*}
Then the isotropy group $\Sigma_{\G}\big({\mathcal{A}^{}(\lambda})\big)$ and the centralizer $\C_{\G}({\mathcal{R}^{}(\widetilde{\lambda})})$ 
coincide for $\lambda=\widetilde{\lambda}\in \mathbb{C}\setminus\{k\pi\}_{i k\in \mathbb{Z}}$, and for $\lambda\in \mathbb{C}^{*}$, $\widetilde{\lambda} \in \mathbb{C}\setminus\{i k\pi\}_{k\in \mathbb{Z}}$ they are both conjugate to the subgroup 
\[
\mathbb{X}:=\{\mathcal{X}\oplus (\mathcal{X}^{-1})^{T}\mid \mathcal{X}\in \mathbb{T}^{\alpha,\mu}\}\subset \mathbb{T}^{\alpha,\mu}\oplus \mathbb{T}^{\alpha,\mu};\,\quad
\dim_{\mathbb{C}}(\mathbb{X})=\sum_{r=1}^{N} m_r\big(\alpha_r m_r+2\sum_{s=1}^{r-1}\alpha_s m_s\big).
\]
\end{theorem}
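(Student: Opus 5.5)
The plan is to reduce the problem to a Sylvester equation and then impose the quadratic form condition, which turns out to be automatic here because the eigenvalues $\lambda$ and $-\lambda$ are different.

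\emph{Step 1: the isotropy group equals the centralizer.} For $\lambda=\widetilde{\lambda}$ I first want $\Sigma_{\G}(\mathcal{A}(\lambda))=\C_{\G}(\mathcal{R}(\lambda))$. One inclusion is immediate, since $Q$ commuting with $\mathcal{A}$ commutes with $e^{\mathcal{A}}$. For the converse, $\mathcal{A}(\lambda)$ has exactly two eigenvalues $\pm\lambda$, and $\mathcal{R}$ has $e^{\pm\lambda}$. These are distinct precisely when $e^{2\lambda}\neq 1$, which is the stated restriction $\lambda\notin i\pi\mathbb{Z}$. On each generalized eigenspace, $\mathcal{A}$ is a polynomial in $\mathcal{R}$: there $\mathcal{R}=e^{\pm\lambda}e^{N}$ with $N$ nilpotent, and $N=\log(e^{\mp\lambda}\mathcal{R})$ is a finite polynomial. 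The spectral projections are also polynomials in $\mathcal{R}$. Hence $\mathcal{A}$ is a polynomial in $\mathcal{R}$, and commuting with $\mathcal{R}$ implies commuting with $\mathcal{A}$. This is the argument behind (\ref{eqAexp}).

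\emph{Step 2: reduce to a single model.} Conjugating by the exponential on each block, the Jordan structures of $\mathcal{R}(\widetilde{\lambda})$ and $\mathcal{A}(\widetilde{\lambda})$ agree: the blocks $e^{J_\alpha(\widetilde\lambda)}$ are single Jordan blocks, and $e^{-J_\alpha^T}$ corresponds to $-J_\alpha^T$ in the same way. So it suffices to treat $\Sigma_{\G}(\mathcal{A}(\lambda))$ for arbitrary $\lambda\in\mathbb{C}^*$, using the $G$-similarity invariance recalled in the introduction for the conjugation.

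\emph{Step 3: solve the Sylvester equation.} Permute the summands, via a permutation $P$, so that $\mathcal{A}=\mathcal{J}\oplus(-\mathcal{J}^T)$, where
\[
\mathcal{J}=\bigoplus_r\bigoplus_{j=1}^{m_r}J_{\alpha_r}(\lambda).
\]
The form becomes $H'=\begin{bsmallmatrix}0&I\\ (-1)^{c+1}I&0\end{bsmallmatrix}$. Since $\lambda\neq-\lambda$, any $Q$ commuting with $\mathcal{A}$ is block diagonal, $Q=X\oplus Y$, with $X\mathcal{J}=\mathcal{J}X$ and $Y\mathcal{J}^T=\mathcal{J}^TY$. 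The condition $Q^TH'Q=H'$ reduces to $X^TY=I$, i.e.\ $Y=(X^{-1})^T$, and this $Y$ automatically commutes with $\mathcal{J}^T$. The symmetric and skew cases behave identically, so $c$ plays no role. It remains to describe the centralizer of $\mathcal{J}$ in $\GL$. By the classical solution of $XJ=JX$ for Jordan matrices, after a further permutation grouping equal block sizes, this centralizer is exactly $\mathbb{T}^{\alpha,\mu}$ from (\ref{0T0}), which is a group by Proposition \ref{lemanilpo}. This step is the main bookkeeping obstacle: I must check that the permutation carrying the blocks $J_{\alpha_r}(\lambda)^{\oplus m_r}$ to $J_{\alpha_r}(\lambda)\otimes I_{m_r}$-type ordering (Toeplitz blocks with $m_r\times m_s$ entries) is compatible on both halves. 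It is, because the same permutation $\Pi$ applied to $Y$ gives $\Pi(X^{-1})^T\Pi^{-1}=((\Pi X\Pi^{-1})^{-1})^T$ for $\Pi$ a permutation matrix. I would also verify the convention that upper-triangular Toeplitz form is the one in (\ref{0T0}) (block $[0\;\mathcal{T}]$ when $\alpha_r<\alpha_s$).

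\emph{Step 4: dimension count.} The dimension of $\mathbb{X}$ equals the dimension of $\mathbb{T}^{\alpha,\mu}$. Block $(r,s)$ contributes $\min(\alpha_r,\alpha_s)m_rm_s$ parameters. Summing over $r=s$ and both orders of $r\neq s$, using $\min(\alpha_r,\alpha_s)=\alpha_r$ for $s<r$, gives
\[
\sum_r\Big(\alpha_rm_r^2+2\sum_{s<r}\alpha_rm_rm_s\Big).
\]
I should double-check this against the printed formula, which has $\alpha_s$ in the cross term. By my computation the minimum is $\alpha_r$ for $s<r$, so I would state the count in this form and reconcile it with the printed indexing.
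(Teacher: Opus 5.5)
Your argument is correct. It follows the same skeleton as the paper but is genuinely more direct. The skeleton both share is: separate the eigenvalues $\pm\lambda$, let the form couple the two halves, and move the Jordan centralizer into $\mathbb{T}^{\alpha,\mu}$ with the permutation of Lemma \ref{lemaP}.

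Here is where the two routes differ:

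\begin{enumerate}
\item \emph{How the second half is handled.} The paper first brings $\mathcal{A}(\lambda)$ to Jordan form $\bigoplus\big(J_{\alpha_r}(\lambda)\oplus J_{\alpha_r}(-\lambda)\big)$ via $U=\bigoplus(I_{\alpha_r}\oplus\Gamma_{\alpha_r})$. It then has to factor the transformed Gram matrix as $B\widetilde{K}$, with $B=B_1\oplus(-1)^{c+1}B_1^{T}$ (the ``trick'' in (\ref{deDVH})--(\ref{BK})). The form condition becomes $B_1=X_1^{T}B_1X_2$. Only after the $\Omega$-permutation and a final conjugation by $I\oplus\mathcal{B}_1$ in (\ref{QPsi}) does $\mathcal{X}_1\oplus(\mathcal{X}_1^{T})^{-1}$ appear. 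You never normalize $-\mathcal{J}^{T}$, so after one block permutation the form is already $\begin{bsmallmatrix}0&I\\ \pm I&0\end{bsmallmatrix}$. The constraint is then literally $Y=(X^{T})^{-1}$, and this $Y$ commutes with $\mathcal{J}^{T}$ by a one-line transposition. In the paper the corresponding point is left implicit: that $\mathcal{B}_1^{-1}(\mathcal{X}_1^{T})^{-1}\mathcal{B}_1$ again lies in $\mathbb{T}^{\alpha,\mu}$, so that every $\mathcal{X}_1\in\mathbb{T}^{\alpha,\mu}$ actually occurs.
\item \emph{Isotropy group versus centralizer.} Your Step 1 writes $\mathcal{A}$ as a polynomial in $\mathcal{R}$ once $e^{\lambda}\neq e^{-\lambda}$, which handles the two-eigenvalue case directly. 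The paper instead relies on the one-eigenvalue remark (\ref{eqAexp}) and the dimension argument of Lemma \ref{resAoXXAo}, leaving the separation of $e^{\pm\lambda}$ to the reader.
\end{enumerate}

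The paper's route buys uniformity with the proof of Theorem \ref{stabw2}; yours buys brevity and transparency. Your Step 2 is unnecessary: Step 1 already gives $\C_{\G}(\mathcal{R}(\widetilde{\lambda}))=\Sigma_{\G}(\mathcal{A}(\widetilde{\lambda}))$ with $\widetilde{\lambda}\neq 0$.

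On the dimension, trust your count. We have $\dim\mathbb{X}=\dim\mathbb{T}^{\alpha,\mu}=\sum_{r,s}\min(\alpha_r,\alpha_s)m_rm_s$, and $\min(\alpha_r,\alpha_s)=\alpha_r$ for $s<r$. So the printed cross term with $\alpha_s$ is an indexing error. For example, with $\alpha=(2,1)$ and $\mu=(1,1)$, the centralizer of $J_2(\lambda)\oplus J_1(\lambda)$ in $\GL_3(\mathbb{C})$ has dimension $5$, while the printed formula gives $7$. The paper's proof never carries out this count.

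Similarly, the inclusion $\mathbb{X}\subset\mathbb{T}^{\alpha,\mu}\oplus\mathbb{T}^{\alpha,\mu}$ in the statement fails as written when $\alpha_1\geq 2$, because $(\mathcal{X}^{-1})^{T}$ is block lower triangular. It becomes true if the second summand is replaced by $\mathcal{F}(\mathcal{X}^{-1})^{T}\mathcal{F}$, with $\mathcal{F}=\bigoplus_r E_{\alpha_r}(I_{m_r})$ as in (\ref{BBF}). That amounts to one more conjugation by $I\oplus\mathcal{F}$, which you can simply append to your conjugating matrix.
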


\begin{remark}\label{OpI}
\begin{enumerate}[label={(\roman*)},ref={\roman*},wide=0pt,itemsep=5pt]
\item The computation of $\mathbb{X}_c$ for $c\in \{1,2\}$, appearing in Theorem \ref{stabw2}, 
is carried out explicitly by Lemma \ref{EqT}. Also, a matrix $\Psi_G$ such that $\C_{\G}({\mathcal{R}^{}_{}})=\Psi_G^{-1}\mathbb{X}_c \Psi_G $ is provided for either $c=1$, $\G=\O_n(\mathbb{F},H)$ or $c=2$, $\G=\Sp_n(\mathbb{F},H)$. Similarly, we find $\widetilde{\Psi}_G$ such that $\C_{\G}\big(\mathcal{R}^{}(\lambda)\big)=\widetilde{\Psi}_G^{-1}\mathbb{X} \widetilde{\Psi}_G $ 
in Theorem \ref{stabw1}.
\item Theorem \ref{stabw1} does not seem to follow directly from Theorem \ref{stabw2} (the 
unipotent and nilpotent case).
As noted in Sec. \ref{Sec1}, the centralizers $\C_{\G}(S)$, $\C_{\G}(U)$ of components in the Jordan-Chevalley decomposition $R=SU$ admit nice descriptions via con\-ju\-ga\-tion by different matrices, making their intersection difficult to determine.
\end{enumerate}
\end{remark}

\section{Certain block matrix equation}\label{cereq}

For $c\in \{1,2\}$, 
$\alpha=(\alpha_1,\ldots,\alpha_N)$ with $\alpha_{1}>\ldots >\alpha_{N}$ and 
$\mu=(m_1,\ldots,m_N)$, 
let 
\small
\begin{align}\label{BBF}
&\mathcal{B}=\bigoplus_{r=1}^{N}T_a\big(B_0^{r},B_1^{r},\ldots,B_{\alpha_r-1}^{r}\big),\quad \mathcal{C}=\bigoplus_{r=1}^{N}T_a\big(C_0^{r},C_1^{r},\ldots,C_{\alpha_r-1}^{r}\big),\quad \mathcal{F}=\bigoplus_{r=1}^{N}E_{\alpha_r}(I_{m_r}),
\end{align}
\normalsize
in which $B_{j}^{r},C_{j}^{r}\in  \mathbb{F}^{m_r \times m_r}$ are symmetric for $\alpha_r-j+c$ even and skew-symmetric for $\alpha_r-j+c$ odd (i.e. $(B_j^{r})^{T}=(-1)^{\alpha_r-j+c}B_j^{r}$), $B_{0}^{r},C_{0}^{r}\in \GL_{m_r}(\mathbb{F})$, 
and $E_{\alpha}(I_{m}):=
\begin{bsmallmatrix}
 0                 &      & I_{m}\\
            &   \iddots     &  \\
I_{m}            &           &  0\\
\end{bsmallmatrix}
$
is the $\alpha\textrm{-by-} \alpha$ block exchange matrix with $I_m$ on the anti-diagonal; by
\setlength{\arraycolsep}{4.4pt}
\small
\begin{align*}
T_a(A_0,\ldots,A_{\alpha-1}):=\begin{bmatrix}
  A_{0} & A_{1}         &  \ldots             & \ldots &    A_{\alpha-1}  \\
0       & -A_0 & -A_{1}   &  \ldots    & -A_{\alpha-2} \\
 \vdots & \ddots            & A_0              & \ddots &   \vdots \\ 
 \vdots &  & \ddots   & \ddots            &  (-1)^{\alpha-2}A_1\\
0       & \ldots            &  \ldots &  0      & (-1)^{\alpha-1}A_0
\end{bmatrix}
\end{align*}
\normalsize
we denote an $\alpha$-by-$\alpha$ block \emph{alternating upper triangular Toeplitz} matrix 
such that $T_a(A_0,\ldots,A_{\alpha-1})=[T_{jk}]_{j,k=1}^{\alpha}$, $T_{jk}=0$ for $j>k$, $T_{(j+1)(k+1)}=-T_{jk}$.
To prove Theorem \ref{stabw2} it is essential 
to find all $\mathcal{X}\in \mathbb{T}^{\alpha,\mu}$ 
(see (\ref{0T0})) that solve the equation
\begin{equation}\label{eqFYFIY}
\mathcal{C}=\mathcal{F}\mathcal{X}^{T}\mathcal{F}\mathcal{B} \mathcal{X}.
\end{equation}

The following lemma provides the solution of (\ref{eqFYFIY}). 
It is adapted from 
\cite[Lemma 4.1]{TSOC,TSOS},
with several important modifications to suit our setting.

\begin{lemma}\label{EqT}
Given $c\in\{1,2\}$, 
$\alpha=(\alpha_1,\ldots,\alpha_N)$ with $\alpha_{1}>\ldots >\alpha_{N}$ and 
$\mu=(m_1,\ldots,m_N)$, 
let 
$\mathcal{B},\mathcal{C}$ be as in (\ref{BBF}). Then 
$\mathcal{X}=[\mathcal{X}_{rs}]_{r,s=1}^{N}\in \mathbb{T}^{\alpha, \mu}$ (defined by (\ref{0T0})), i.e. 
\begin{equation*}\label{EqTX}
\mathcal{X}_{rs}=
\left\{
\begin{array}{ll}
\hspace{-1mm}[0\quad \mathcal{T}_{rs}], & \alpha_r<\alpha_s\\
\hspace{-1mm}\begin{bmatrix}
\mathcal{T}_{rs}\\
0
\end{bmatrix}, & \alpha_r>\alpha_s\\
\hspace{-1mm}\mathcal{T}_{rs},& \alpha_r=\alpha_s
\end{array}\right., \qquad 
\begin{array}{l}
\\
\mathcal{T}_{rs}=T\big(A_0^{rs},\ldots,A_{b_{rs}-1}^{rs}\big),\quad b_{rs}:=\min\{\alpha_s,\alpha_r\},\\
\\
A_j^{rs}\in \mathbb{F}^{m_r\times m_s},\quad A_0^{rr}\in GL_{m_r}(\mathbb{F}),
\end{array}
\end{equation*}
%
is a solution of the equation (\ref{eqFYFIY}) if and only if it
satisfies the following properties:
\begin{enumerate}[label={(\alph*)},ref={\alph*},
leftmargin=13pt,itemindent=3pt]
%
\item \label{EqT2} 
Each $A_0^{rr}$ is a solution of the equation $C_0^{r}=(A_0^{rr})^{T}B_0^{r}A_0^{rr}$. If $N\geq 2$ matrices $A_j^{rs}$ for $j\in \{0,\ldots,\alpha_{r}-1\}$, $r,s\in \{1,\ldots,N\}$ with $r>s$ can be taken arbitrarily. 

\item \label{EqT3ca} 
Assuming (\ref{EqT2}) and choosing freely the $m_r$-by-$m_r$ matrices $Z_j^{r}$ satisfying $(Z_j^{r})^{T}=(-1)^{\alpha_r -j + c+1} Z_j^{r}$ for $j\in \{1,\ldots,\alpha_r-1\}$, the remaining entries are computed as follows:
\end{enumerate}

\vspace{-5mm}
\hspace{-12mm}
\begin{algorithmic}
\vspace{1mm}
\State $\Psi^{krs}_{n}:=\sum_{l=0}^{n}\sum_{i=0}^{n-l}(-1)^{l}(A_l^{kr})^{T}B_{n-l-i}^{k}A_i^{ks}$
\small
\State 
$\xi_n^{rs}:=
\sum_{l=1}^{n-1}\sum_{i=0}^{n-l}(-1)^{l}(A_l^{rr})^{T}B_{n-l-i}^{r}A_i^{rs}
+ \sum_{i=0}^{n-1}(A_0^{rr})^{T}B_{n-i}^{k}A_i^{rs}
+\left\{\begin{array}{ll}
\hspace{-2mm} 0, & s=r\\
\hspace{-2mm} (-1)^{n}(A_n^{rr})^{T}B_{0}^{k}A_0^{rs}, &  s> r
\end{array}
\right.
$
\normalsize
\For {$j=0:\alpha_1-1$}
    \If {$r\in \{1,\ldots,N\}$,  $ j\in \{1,\ldots,\alpha_r-1\}$}
    \State $A_j^{rr}= A_0^{rr}(C_0^{r})^{-1}\Big(Z_j^{r}+C_j^r-\frac{1}{2}\big(\xi_j^{rr}
    +\sum_{k=1}^{r-1}\Psi_{j-\alpha_k+\alpha_r}^{krr}+\sum_{k=r+1}^{N}\Psi_{j-\alpha_{r}+\alpha_k}^{krr}\big)\Big)$
    \EndIf
    \For {$p=1:N-1$}
        \If {$r\in \{1,\ldots,N\}$, $j\leq \alpha_{r+p}-1$, $r+p\leq N$}
        \small
        \State \hspace{-3mm} $A_j^{r(r+p)}=-A_0^{r(r+p)}(C_0^{r})^{-1}\Big(
        \xi_j^{r(r+p)}+\sum_{k=1}^{r-1}\Psi_{j-\alpha_k+\alpha_r}^{kr(r+p)}+ \sum_{k=r+1}^{r+p}\Psi_{j}^{kr(r+p)} $\\
        \qquad \qquad \qquad \qquad \qquad \qquad \qquad \qquad \qquad \qquad \qquad \qquad $+\sum_{k=r+p+1}^{N}\Psi_{j-\alpha_{r+p}+\alpha_k}^{kr(r+p)}\Big)$   
\normalsize
        \EndIf
    \EndFor
\EndFor
\end{algorithmic}
(We define $\sum_{j=l}^{n}a_j=0$ for $l>n$, and the loop p =1 : N-1 is not performed for $N=1$.)\\
The dimension of the space of solutions $\mathcal{X}\in\mathbb{T}^{\alpha,\mu}$ of the equation (\ref{eqFYFIY}) is  
\small
\begin{align}\label{cdim}
\sum_{r=1}^{N}\big(\tfrac{1}{2}m_r^{2}\alpha_r+\sum_{s=1}^{r-1}\alpha_s m_r m_s\big) -
\sum_{\alpha_r \textrm{ odd}}\tfrac{1}{2} m_r. 
\end{align}
\normalsize

Furtermore, if $\mathcal{B},\mathcal{C}$ are real, then $\mathcal{X}$ is real 
if and only if 
the following statements hold
%
\begin{enumerate}[label={(\roman*)},ref={\roman*},itemsep=1pt,leftmargin=25pt,itemindent=0pt]
\item Matrices $B_0^{r}$ and $C_0^{r}$ in (\ref{BBF}) 
have the same inertia
for all $r\in \{1,\ldots,N\}$.
\item All matrices $A_0^{rr}$, matrices $A_j^{rs}$ with $r>s$, $j\in \{0,\ldots,\alpha_{r}-1\}$,
and $Z_j^{r}$ for $j\in \{1,\ldots,\alpha_{r}-1\}$ in 
(\ref{EqT2}) and (\ref{EqT3ca}) 
are chosen to be real.
\end{enumerate}

\end{lemma}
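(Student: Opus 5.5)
The plan is to expand the equation (\ref{eqFYFIY}) blockwise and solve it by induction on the superdiagonal index $j$. This follows the scheme of \cite[Lemma 4.1]{TSOC,TSOS}, with two changes: the exchange matrix $\mathcal{F}$ is paired with the alternating Toeplitz blocks $T_a$, and the symmetry type of $B_j^r$ depends on the parity of $\alpha_r-j+c$.

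First I will simplify the equation. For a rectangular upper triangular Toeplitz block $\mathcal{X}_{kr}$, a direct computation shows that $E_{\alpha_r}\mathcal{X}_{kr}^{T}E_{\alpha_k}$ is again rectangular upper triangular Toeplitz, with the padding zeros moved to the opposite side. Next, $T_a(B_0^k,\ldots)$ factors as $T(B_0^k,B_1^k,\ldots)\cdot\operatorname{diag}(I,-I,I,\ldots)$ up to the sign convention. So the $(r,s)$ block of $\mathcal{F}\mathcal{X}^T\mathcal{F}\mathcal{B}\mathcal{X}$ equals
\[
\sum_k E\,\mathcal{X}_{kr}^TE\,T_a(B^k)\,\mathcal{X}_{ks},
\]
and its entries along the first row are exactly the sums $\Psi^{krs}_{n}$. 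The index $n$ is shifted by $\alpha_k-\alpha_r$ or $\alpha_r-\alpha_k$ according to which zero padding occurs. Products of triangular Toeplitz matrices are again Toeplitz, and the alternating sign pattern is reproduced on both sides. Hence it suffices to compare first block rows, which gives
\[
C^{r}_{n}\delta_{rs}=\sum_k\Psi^{krs}_{n+\text{shift}}
\]
for $0\le n\le b_{rs}-1$. The equations for $(s,r)$ are the transposes of those for $(r,s)$, up to the sign $(-1)^{\alpha_r-n+c}$, so only $r\le s$ needs to be treated.

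Second, I will solve degree by degree. At $n=0$ with $r=s$, only $k=r$ contributes (the other terms have positive shift and involve only entries of lower index), and this gives $C_0^r=(A_0^{rr})^TB_0^rA_0^{rr}$, which is (\ref{EqT2}). The blocks with $r>s$ never appear linearly with an invertible coefficient in any equation, so they stay free. At step $j\ge 1$:

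\begin{itemize}
\item In the $(r,r)$ equation, $A_j^{rr}$ enters through $(A_0^{rr})^TB_0^rA_j^{rr}+(-1)^j(A_j^{rr})^TB_0^rA_0^{rr}$. Setting $Y=(A_0^{rr})^TB_0^rA_j^{rr}$, this term is $Y\pm Y^T$, with the sign fixed by the symmetry of $B_0^r$. The equation therefore determines only the part of $Y$ of the required symmetry type. I will write $Y=\tfrac12(\text{rhs})+Z_j^r$ with $Z_j^r$ of the opposite symmetry, which gives the stated formula for $A_j^{rr}$ after using $(A_0^{rr})^{-T}=C_0^{-1}(A_0^{rr})^TB_0^r$.
\item For $r<s$, the term $(A_0^{rr})^TB_0^rA_j^{rs}$ appears linearly with an invertible coefficient. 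This forces $A_j^{rs}$ uniquely, giving the second line of the algorithm and condition (\ref{stabs2}).
\end{itemize}

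The consistency check I must make is that the right-hand side actually has the required symmetry. I will get this from the transpose relation between the $(r,s)$ and $(s,r)$ equations together with the parity assumption on $B_j^r$ and $C_j^r$.

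Third, I will count dimensions.

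\begin{itemize}
\item The free parameters are the blocks $A_j^{rs}$ with $r>s$, contributing $\sum_{s<r}\alpha_r m_rm_s$. This matches $\sum_{s<r}\alpha_s m_rm_s$ after relabelling, since $b_{rs}=\alpha_r$ when $r>s$.
\item The group of solutions $A_0^{rr}$ has dimension $\tfrac12 m_r(m_r\mp1)$.
\item The $Z_j^r$ contribute $\tfrac12 m_r(m_r\pm1)$ each, with alternating signs in $j$.
\end{itemize}

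Summing the $r$-th diagonal contributions over $j=0,\ldots,\alpha_r-1$ gives $\tfrac12\alpha_r m_r^2$, with a leftover of $\pm\tfrac12 m_r$ exactly when $\alpha_r$ is odd. This yields (\ref{cdim}), provided the sign bookkeeping gives $-$ with this normalization.

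For the real case, a real $A_0^{rr}$ exists if and only if $B_0^r$ and $C_0^r$ are real congruent, which by Sylvester's law means they have the same inertia (in the skew case this is automatic). All the recursion formulas are real polynomial expressions in the free data, so reality of the free data is equivalent to reality of $\mathcal{X}$.

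The main obstacle is the bookkeeping in the first step: computing the exact index shifts for the rectangular blocks, confirming that the resulting sums are precisely $\Psi^{krs}_{j\mp(\alpha_k-\alpha_r)}$ and $\xi_j^{rs}$, and checking the symmetry consistency under the alternating signs. I would verify these on the $N=2$ example first.
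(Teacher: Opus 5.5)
Your plan is the paper's proof. You use $E_{\alpha_r}T_a(B^r)^{T}E_{\alpha_r}=(-1)^{c+1}T_a(B^r)$ to reduce (\ref{eqFYFIY}) to the upper-triangular blocks and then to their first rows. You read off $C_0^r=(A_0^{rr})^TB_0^rA_0^{rr}$ and force $A_j^{rs}$ ($r<s$) through the invertible coefficient $(A_0^{rr})^TB_0^r$. You obtain $A_j^{rr}$ from $Y+(-1)^{\alpha_r-j+c}Y^T=\textrm{rhs}$ with a free $Z_j^r$.

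Two small corrections to that part:
\begin{itemize}
\item The blocks with $r>s$ are free not because they ``never appear with an invertible coefficient''. They are free because, for every choice of them, the upper-triangular system is solved recursively, and the lower-triangular equations then follow from the transpose symmetry.
\item The inverse you need is $((A_0^{rr})^TB_0^r)^{-1}=A_0^{rr}(C_0^r)^{-1}$. Your expression $(C_0^r)^{-1}(A_0^{rr})^TB_0^r$ equals $(A_0^{rr})^{-1}$, not $(A_0^{rr})^{-T}$.
\end{itemize}
Your argument for the real case is an addition; the paper's proof does not treat it.

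The step that fails is the dimension count. The equality $\sum_{s<r}\alpha_r m_rm_s=\sum_{s<r}\alpha_s m_rm_s$ ``after relabelling'' is false: swapping $r$ and $s$ turns the left side into a sum over $r<s$, not $s<r$. For $N=2$, $\alpha=(3,1)$, $m=(1,1)$ the two sides are $1$ and $3$. Your count is the right one. By (\ref{EqT2}), each block with $r>s$ carries $b_{rs}=\alpha_r$ free matrices, and the total agrees with the classical $\tfrac12\sum_{r,s}\min\{\alpha_r,\alpha_s\}m_rm_s\pm\dots$. The cross term in (\ref{cdim}), and in the paper's own summation, has the index of $\alpha$ swapped. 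In the example with $c=1$, the free data are $A_0^{21}$ and the symmetric $1\times 1$ matrix $Z_1^1$, so the solution set has dimension $2$, whereas (\ref{cdim}) gives $4$.

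You also cannot leave the sign open. Level $j$ of the $r$-th diagonal block contributes $\tfrac12 m_r\big(m_r-(-1)^{\alpha_r-j+c}\big)$, so the correction term is $(-1)^c\tfrac12\sum_{\alpha_r\textrm{ odd}}m_r$. This is how the paper's proof ends, and it is the sign used in Theorem~\ref{stabw2}. The minus sign in (\ref{cdim}) is correct only for $c=1$. For example, with $N=1$, $\alpha_1=1$, $c=2$, the solutions form a coset of a symplectic group, of dimension $\tfrac12 m_1(m_1+1)$.

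So what your argument proves, and what should be proved, is
\[
\sum_{r=1}^{N}\Big(\tfrac12\alpha_r m_r^2+\sum_{s=1}^{r-1}\alpha_r m_rm_s\Big)+(-1)^c\,\tfrac12\sum_{\alpha_r\textrm{ odd}}m_r .
\]
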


Observe that $\mathcal{X}$ in Lemma \ref{EqT} is consistent with the conditions (\ref{stabs0}), (\ref{stabs4}), (\ref{stabs2}) in Sec. \ref{secIG}. Next, the order of the calculation of the entries of $\mathcal{X}$ is the following:
\begin{itemize}[
wide=12pt,itemsep=2pt,leftmargin=25pt]
%
\item $\mathcal{X}_{rs}$ 
for $r>s$ (the blocks below the main diagonal of $\mathcal{X}$; chosen freely),

\item $(\mathcal{X}_{rr})_{11}=A_0^{rr}$ (the diagonals of the main diagonal blocks of $\mathcal{X}$),

\item $(\mathcal{X}_{r(r+1)})_{11}=A_{0}^{r(r+1)}$\hspace{-1mm} (the diagonals of the 1st upper off-diagonal blocks of $\mathcal{X}$),


\item[\ldots]
\item $(\mathcal{X}_{r(r+p)})_{1(j+1)}=A_{j+1}^{r(r+p)}$ with $1\leq p\leq N-r$, $1\leq j\leq \alpha_{r+p}-1$ (the $j$-th upper off-diagonals of the $p$-th upper off-diagonal blocks of $\mathcal{X}$),
\item[\ldots]
\item $(\mathcal{X}_{11})_{1(\alpha_1-1)}=A_{\alpha_1-1}^{11}$ (the last entry in the first row of $\mathcal{X}_{11}$).
\end{itemize}

\begin{proof}[Proof of Lemma \ref{EqT}]
Since $(-1)^{\alpha_r-j}(B_j^{r})^{T}=(-1)^{c}B_j^{r}$, 
one easily verifies that 
\small
\begin{align}\label{ETTE}
E_{\alpha_r}(I_{m_r})\big(T_a(B_0^{r},B_1^{r},\ldots,B_{\alpha_r-1}^{r})\big)^{T}E_{\alpha_r}(I_{m_r})
 & =
(-1)^{\alpha_r+1}T_a\big((B_0^{r})^{T},-(B_1^{r})^{T},\ldots,(-1)^{\alpha_r-1} (B_{\alpha_r-1}^{r})^{T}\big)\nonumber \\
 & =
(-1)^{c+1} T_a\big(B_0^{r},B_1^{r},\ldots,B_{\alpha_r-1}^{r}\big).
\end{align}
\normalsize
Therefore
\[
(\mathcal{F}\mathcal{X}^{T}\mathcal{F}\mathcal{B} \mathcal{X})^{T}=\mathcal{X}^{T}\mathcal{B}^{T}\mathcal{F}\mathcal{X}\mathcal{F} =
\mathcal{X}^{T}\mathcal{F}(\mathcal{F}\mathcal{B}^{T}\mathcal{F})\mathcal{X}\mathcal{F}=
(-1)^{c+1}\mathcal{F}(\mathcal{F}\mathcal{X}^{T}\mathcal{F}\mathcal{B}\mathcal{X})\mathcal{F},
\]
so it suffices to compare the blocks in the upper triangular parts of $\mathcal{F}X^{T}\mathcal{F}\mathcal{B} X$ and $\mathcal{C}$.  
Since these blocks are rectangular upper triangular Toeplitz of the same size,
it is further enough to compare their first rows.
By simplifying the notation with $\mathcal{Y}:=\mathcal{B}\mathcal{X}$ and $\widetilde{\mathcal{X}}:=\mathcal{F}X^{T}\mathcal{F}$, 
the equation (\ref{eqFYFIY}) reduces to a system of equations:
\begin{align}
\label{f1}
(\mathcal{C}_{r(r+p)})_{1j}= & \big((\widetilde{\mathcal{X}}\mathcal{Y})_{r(r+p)}\big)_{1j} \qquad \qquad \qquad (1\leq j\leq \alpha_{r+p},\quad 0\leq p \leq N-r)  \nonumber\\
= & (\widetilde{\mathcal{X}}_{rr})_{(1)}(\mathcal{Y}_{r(r+p)})^{(j)}+\sum_{k=r+1}^{N}(\widetilde{\mathcal{X}}_{rk})_{(1)}(\mathcal{Y}_{k(r+p)})^{(j)}
 +\sum_{k=1}^{r-1}(\widetilde{\mathcal{X}}_{rk})_{(1)}(\mathcal{Y}_{k(r+p)})^{(j)}.
\end{align}

First, to calculate $A_0^{rr}$ for $r\in \{1,\ldots,N\}$ we observe (\ref{f1}) for $p=0$, $j=1$. Since
\[
(\widetilde{\mathcal{X}}_{rk})_{(1)}=\left\{
\begin{array}{ll}
\begin{bsmallmatrix}
(A_0^{kr})^{T} & * & \ldots & *
\end{bsmallmatrix}, & k\geq r\\
\begin{bsmallmatrix}
0 & * & \ldots & *
\end{bsmallmatrix}, & k<r
\end{array}
\right.,\qquad
(\mathcal{Y}_{kr})^{(1)}=
\left\{
\begin{array}{ll}
\begin{bsmallmatrix}
B_0^{k}A_0^{kr} \\
0\\
\vdots\\
0
\end{bsmallmatrix}, & k\leq r\\
0, & k>r
\end{array}
\right.,
\]
\vspace{-1mm}
we deduce $\sum_{k=1}^{N}(\widetilde{\mathcal{X}}_{rk})_{(1)}((\mathcal{Y})_{kr})^{(1)}=
(A_0^{rr})^{T}B_0^{r}A_0^{rr}$, and it implies:
\begin{equation}\label{GABA}
C_0^{r}=(A_0^{rr})^{T}B_0^{r}A_0^{rr}, \qquad r\in \{1,\ldots,N\}.
\end{equation}
To get (\ref{EqT2}), we arbitrarily set $\mathcal{X}_{rs}$ for $N\geq 2$, $r>s$ (below the main diagonal of $\mathcal{X}$).

Next, we inductively compute the remaining entries.
We fix $p\in \{0,\ldots, N-1 \}$, $j\leq \alpha_r-1$ (but not $p=j=0$) and solve (\ref{f1}) to get $A_j^{r(r+p)}$ (loops $j=0:\alpha_1-1$, $p=1:N-1$ 
in (\ref{EqT3ca})),
while assuming that we have already determined $A_{n}^{rs}$ for 
%
\begin{align}\label{induA1}
&j\geq 1, n\in \{0,\ldots,j-1\}, s\geq r 
\quad\textrm{or}\quad p\geq 1, n=j, r\leq s\leq r+p-1 \\
&\textrm{ or } \quad s\leq r, n\in \{0,\ldots,b_{rs}-1\}, N\geq 2, \qquad (1 \leq r, s\leq N).\nonumber
\end{align}
\normalsize

We have
\begin{align*}
\widetilde{\mathcal{X}}_{rk}
=
E_{\alpha_r}(I_{m_r})\mathcal{X}_{kr}^{T} E_{\alpha_k}(I_{m_k})
&=
\left\{
\begin{array}{ll}
\hspace{-1mm}\begin{bsmallmatrix}
\widetilde{\mathcal{T}}_{rk}\\
0
\hspace{-1mm}\end{bsmallmatrix}, 
& \alpha_r >\alpha_k \\
\hspace{-1mm}\begin{bsmallmatrix}
0 & \widetilde{\mathcal{T}}_{rk}
\end{bsmallmatrix}, & \alpha_r<\alpha_k\\
\widetilde{\mathcal{T}}_{rk}, & \alpha_r=\alpha_k
\end{array}
\right.\hspace{-1mm}, \quad
\widetilde{T}_{rk}=
T\big((A_0^{kr})^{T},\ldots,(A_{b_{kr}-1}^{kr})^{T}\big),
\end{align*}
%
%
%
\begin{align}\label{YSP}
&\mathcal{Y}_{ks} 
=
\left\{
\begin{array}{ll}
\hspace{-1mm}\begin{bsmallmatrix}
\mathcal{S}_{ks}\\
0
\end{bsmallmatrix}, 
& \alpha_k >\alpha_s \\
\hspace{-1mm}\begin{bsmallmatrix}
0 & \mathcal{S}_{ks}
\end{bsmallmatrix}, & \alpha_k<\alpha_s\\
\hspace{-1mm}\mathcal{S}_{ks}, & \alpha_k=\alpha_s
\end{array}
\right. \hspace{-2mm}, \quad  
\mathcal{S}_{ks} 
=T_a\big(\Phi_0^{ks},\ldots, \Phi_{b_{ks}-1}^{ks}\big),\quad
\Phi_{n}^{ks}:=\sum_{i=0}^{n} B_{n-i}^{k}A_i^{ks}.
\end{align}
To simplify the computations we set ($k,r,s\in \{1,\ldots,N\}$, $n\in \{0,\ldots, b_{rs}-1\}$):
\begin{align}
\label{Prsk}
\Psi^{krs}_{n}:=
&
\left\{
\begin{array}{ll}
\hspace{-1mm} 
\begin{bsmallmatrix}
(A_0^{kr})^{T} & (A_1^{kr})^{T} & \ldots & (A_{n}^{rr})^{T} 
\end{bsmallmatrix}
\begin{bsmallmatrix}
\Phi_{n}^{ks} \\
-\Phi_{n-1}^{ks} \\
\vdots \\
(-1)^{n}\Phi_{0}^{ks} \\
\end{bsmallmatrix}, & \hspace{-1mm} n\geq 0
\\
\hspace{-1mm} 0, & \hspace{-1mm} n<0
\end{array}
\right.\\
=
&
\left\{
\begin{array}{ll}
\hspace{-1mm}  \sum_{i=0}^{n}(-1)^{i}(A_i^{kr})^{T}\Phi_{n-i}^{ks}, & \hspace{-1mm}n\geq 0\\
\hspace{-1mm} 0, & \hspace{-1mm} n<0
\end{array}
\right.\hspace{-2mm} \nonumber
\end{align}
\normalsize
and by using the fact $(B_n^{r})^{T}=(-1)^{\alpha_r-n+c}B_n^{r}$, we observe the following:
%
\begin{align}\label{simetrija}
(\Psi^{krs}_{n})^{T}
&
=\sum_{i=0}^{n}(-1)^{n-i}(A_l^{ks})^{T}\big(\sum_{l=0}^{i}( B_{i-l}^{k})^{T}A_{n-i}^{kr}\big)\nonumber
=\sum_{l=0}^{n}\sum_{i=l}^{n}(-1)^{n+\alpha_k+l+c}(A_l^{ks})^{T} B_{i-l}^{k}  A_{n-i}^{kr}
\\
&
=(-1)^{\alpha_k+c}\sum_{l=0}^{n}(-1)^{n-l}(A_{l}^{ks})^{T} \sum_{i'=0}^{n-l} B_{i'}^{k}A_{n-l-i'}^{kr}
=(-1)^{\alpha_k-n+c}\Psi^{ksr}_{n}.
\end{align}
%
Furthermore,
\begin{align}\label{psijr}
(\widetilde{\mathcal{X}}_{rk})_{(1)}(\mathcal{Y}_{k(r+p)})^{(n+1)}
&=
\left\{
\begin{array}{ll}
\Psi_{n-\alpha_{r+p}+\alpha_k}^{kr(r+p)}, &  k\geq r+p+1\\
\Psi_{n}^{kr(r+p)},                       &  r+p \geq k\geq r+1, p\geq 1\\
\Psi_{n-\alpha_k+\alpha_r}^{kr(r+p)},     &  k\leq r\\
\end{array}
\right..
\end{align}
In particular, for $\xi_j^{r(r+p)}$ defined as in the algorithm in (\ref{EqT3ca}), we deduce:
%
\begin{align}
\label{xijrp}
(\widetilde{\mathcal{X}}_{rr})_{(1)}(\mathcal{Y}_{r(r+p)})^{(j+1)}
=\xi_j^{r(r+p)}+
\left\{\begin{array}{ll}
\hspace{-2mm}(A_0^{rr})^{T}B_0^{r}A_j^{rr}+ (-1)^{j}(A_j^{rr})^{T}B_0^{r}A_0^{rr}, &  \hspace{-1mm}
p=0\\
\hspace{-2mm}(A_0^{rr})^{T}B_0^{r}A_j^{r(r+p)}, & \hspace{-1mm} p\geq 1
\end{array}
\right.\hspace{-1mm}. 
\end{align}
\normalsize

Next, we have
\small
\begin{align*}
\begin{array}{l}
(\widetilde{\mathcal{X}}_{rk})_{(1)}=
\left\{
\begin{array}{ll}
\hspace{-1mm}\begin{bsmallmatrix}
(A_0^{kr})^{T} & (A_1^{kr})^{T} & \ldots & (A_{\alpha_{r}-1}^{kr})^{T}
\end{bsmallmatrix}, & \hspace{-1mm} k\geq r\\
\\
\hspace{-1mm}\begin{bsmallmatrix}
0& \ldots & 0 &(A_0^{kr})^{T}  & \ldots &(A_{b_{kr}}^{kr})^{T} 
\end{bsmallmatrix}, & \hspace{-1mm} k<r
\end{array}
\right.\hspace{-1mm},
\\
\\
(\mathcal{Y}_{(r+p)(r+p)})^{(j+1)}=
\begin{bsmallmatrix}
\Phi_j^{r(r+p)} \\
-\Phi_{j-1}^{r(r+p)} \\
\vdots\\
(-1)^{j}\Phi_0^{r(r+p)}
\end{bsmallmatrix},
\end{array}
%
\hspace{-4mm}
(\mathcal{Y}_{k(r+p)})^{(j+1)}=
\left\{\begin{array}{ll}
\hspace{-1mm}\begin{bsmallmatrix}
\Phi_j^{r(r+p)} \\
\vdots\\
(-1)^{j}\Phi_0^{r(r+p)}\\
0\\
\vdots\\
0
\end{bsmallmatrix}, & \hspace{-4mm} r+p > k\\
\hspace{-1mm} \begin{bsmallmatrix}
\Phi_{j-\alpha_{r+p}-\alpha_k}^{r(r+p)} \\
\vdots\\
(-1)^{j-\alpha_{r+p}-\alpha_k}\Phi_0^{r(r+p)}\\
0\\
\vdots\\
0
\end{bsmallmatrix}, & \hspace{-2mm} k> r+p
\end{array}
\right.
\hspace{-2mm}.
\end{align*}
\normalsize
Hence, the last two terms in (\ref{f1}) for $j+1$ instead of $j$ ($N\geq r+1\geq 2$) are:
%
\begin{align}\label{thjrp}
\Xi (j,r,p):=
& \sum_{k=r+1}^{N}(\widetilde{\mathcal{X}}_{rk})_{(1)}(\mathcal{Y}_{k(r+p)})^{(j+1)}\\
=
&\left\{\begin{array}{ll}
\sum_{k=r+1}^{N}\Psi_{j-\alpha_{r}+\alpha_k}^{krr}, & j\geq 1, p=0\\
\sum_{k=r+1}^{r+p}\Psi_{j}^{kr(r+p)}
+\sum_{k=r+p+1}^{N}\Psi_{j-\alpha_{r+p}+\alpha_k}^{kr(r+p)}, & j\geq 0, p\geq 1. 
\end{array}
\right.,\nonumber\\
%
\label{lajrp}
\Lambda(j,r,p):=
&  \sum_{k=1}^{r-1}(\widetilde{\mathcal{X}}_{rk})_{(1)}(\mathcal{Y}_{k(r+p)})^{(j+1)}=\sum_{k=1}^{r-1}\Psi_{j-\alpha_k+\alpha_r}^{kr(r+p)}.
\end{align}
For $j,p\geq 0$ with $j+p\geq 1$ we define
\begin{equation}\label{Djrp}
D_j^{r(r+p)}:=\xi_j^{r(r+p)}+\Xi(j,r,p)+\Lambda(j,r,p).
\end{equation}
We combine (\ref{f1}) with (\ref{xijrp}), (\ref{thjrp}), (\ref{lajrp}), (\ref{Djrp}) to obtain:
\begin{align}
\label{eqATB1}
&(A_0^{rr})^{T}B_0^{r}A_j^{r(r+p)}=-D_j^{r(r+p)},\qquad p\geq 1,\\
\label{eqATB2}
&(A_0^{rr})^{T}B_0^{r}A_j^{rr}+(-1)^{j}(A_j^{rr})^{T}B_0^{r}A_0^{rr}=C_j^{r}-D_j^{rr}, \qquad j\geq 1\,\,\,\,\, (p=0).
\end{align}
From (\ref{Prsk}) we get 
$\Psi^{krr}_{j-\alpha_r+\alpha_k}=(-1)^{\alpha_r-j+c}(\Psi^{krr}_{j-\alpha_r+\alpha_k})^{T}$, thus $\xi_j^{rr}$, $\Xi(j,r,0)$, $\Lambda(j,r,0)$, $C_j^{r}-D_j^{rr}$ are symmetric (skew-symmetric) for $\alpha_r-j+c$ even (odd).

Since (\ref{GABA}) implies 
$A_0^{rr}(C_0^{r})^{-1}=
((A_0^{rr})^{T}B_0^{r})^{-1}$, 
the equation
(\ref{eqATB1}) yields $A_j^{r(r+p)}=-A_0^{rr}(C_0^{r})^{-1}D_j^{r(r+p)}$ for $p\geq 1$.
Next, we get $A_j^{rr}$ by solving (\ref{eqATB2}). It is of the form $A^{T}X+(-1)^{\alpha_r-j+c}X^{T}A=B$ with $B^{T}=(-1)^{\alpha_r -j + c}B$,
and its solution is $X=(A^{T})^{-1}(Z+\frac{1}{2}B)$ with 
$Z^{T}=(-1)^{\alpha_r -j + c+1}Z$.
We have $A^T=(A_0^{rr})^{T}B_0^{r}$ (hence
$(A^{T})^{-1}=
A_0^{rr}(C_0^{r})^{-1}$)
and $B=C_j^{r}-D_j^{rr}$, which depend only on $A_n^{rs}$ with $n,r,s$ satisfying (\ref{induA1}).

To complete the proof of the lemma it is left to sum up the dimensions (\ref{cdim}):
\small
\begin{align*}
%
&\sum_{r=1}^{N}\sum_{s=1}^{r-1}\alpha_s m_r m_s
+ \hspace{-2.7mm}\sum_{\alpha_r \textrm{ even}}\hspace{-2.5mm}
\tfrac{\alpha_r}{4}\big(
(m_r^2-m_r)+(m_r^2+m_r)\big)+\hspace{-2.7mm}
\sum_{\alpha_r \textrm{ odd}}\hspace{-2.0mm}\big(
\tfrac{\alpha_r+(-1)^{c}}{4}(m_r^2-m_r)+
\tfrac{\alpha_r-(-1)^{c}}{4}(m_r^2+m_r)\big).
\end{align*}
\normalsize
%
By simplifying it we get 
$\sum_{r=1}^{N}\big(\tfrac{1}{2}m_r^{2}\alpha_r+\sum_{s=1}^{r-1}\alpha_s m_r m_s\big) 
+(-1)^{c}\sum_{\alpha_r \textrm{ odd}}\tfrac{1}{2} m_r$.
%
\end{proof}

\begin{example}
Le us solve $\mathcal{F}\mathcal{X}^{T}\mathcal{F}\mathcal{B}\mathcal{X}=\mathcal{B}$ for $\mathcal{F}=E_3(I)\oplus E_2(I)$, $\mathcal{B}=\mathcal{C}=(J\oplus -J\oplus J)\oplus (I \oplus -I)$ with the identity $I$ and nonsingular skew-symmetric $J$, and $\mathcal{X}$ partitioned conformally to $\mathcal{B}$. 
We have:
\small
\begin{align*}
&\qquad
\mathcal{B}=(\mathcal{F}\mathcal{X}^{T}\mathcal{F})(\mathcal{B}\mathcal{X})
=\begin{bmatrix}[ccc|c c]
A_1^{T} & B_1^{T} & C_1^{T}  &  P^{T}  &  Q^{T} \\  
0   & A_1^{T} & B_1^{T}     &  0      &   P^{T} \\ 
0   & 0   & A_1^{T}      &  0   & 0   \\  
\hline
0    & M^{T}  & N^{T}    &  A_2^{T}  &  B_2^{T} \\  
 0   & 0     & M^{T}     &  0       &   A_2 ^{T}  
\end{bmatrix}
\begin{bmatrix}[ccc|cc]
JA_1 & JB_1 & JC_1  &  JM  &  JN \\  
0   & -JA_1 & -JB_1    &  0  &   -J M  \\  
0   & 0   & J A_1     &  0  & 0   \\  
\hline
0     & P  & Q                &  A_2    &  B_2 \\  
0     & 0   & -P              &  0    &  -A_2    
\end{bmatrix}=\\
&=
\begin{bmatrix}[ccc|cc]
A_1^{T}JA_1 & A_1^{T}JB_1\hspace{-0.5mm}-\hspace{-0.5mm}B_1^{T} J A_1 & A_1^{T}J C_1\hspace{-0.5mm}+\hspace{-0.5mm}C_1^{T}J A_1\hspace{-0.5mm}-\hspace{-0.5mm}B_1^{T}J  B_1    & \hspace{-0.5mm} A_1^{T}JM\hspace{-0.5mm}+\hspace{-0.5mm}P^{T}A_2 \hspace{-0.5mm} &  A_1^{T}J N+B_1^{T}JM  \\ 
& & + P^{T}Q - Q^{T} P & & + P^{T}B_2- Q^{T}A_2 \\ 
  0         & -A_1^{T}JA_1 &  -A_1^{T} J B_1 +B_1^{T}J A_1    &  0   &   -A_1^{T}JM - P^{T}A_2 \\ 
0   &     0                  & A_1^{T}J A_1                        &  0  & 0             \\  
\hline 
   &    &                     &  A_{2}^{T} A_2       &   A_2^{T}B_2 - B_2^{T} A_2 \\ 
	 &    &                     &                       &     -M^{T}JM                          \\
   &    &                       &      0           &    + A_2^{T}A_2         
\end{bmatrix}
\end{align*}
\normalsize
By comparing diagonals of the main diagonal blocks we get $J=(A_1)^{T} J A_1$, $I=(A_2)^{T}A_2$ ($A_1$ $J$-symplectic, $A_2$ orthogonal). Next, choose $P,Q$ arbitra\-rily. The diagonal element of the upper right block gives $A_1^{T}JM+P^{T}A_2=0$, 
thus $M=A_1^{} J^{-1}P^{T}A_2$.

The first superdiagonals yield $A_1^{T}JB_1-B_1^{T} J A_1=0$, $A_2^{T}B_2 - B_2^{T} A_2-M^{T}JM=0$ and $A_1^{T}JN-B_1^{T}M+P^{T}B_2-Q^{T}A_2$, thus $B_1=A_1J^{-1}Z_1$ with $Z_1$ skew-symmetric, $B_2=\frac{1}{2}A_2J^{-1}M^{T}JM+ A_2J^{-1}Z_2$ with $Z_2$ symmetric, $N=A_1 J^{-1}(B_1^{T}M-P^{T}B_2+Q^{T}A_2)$, respectively.
Finally, the second superdiagonal yields $A_1^{T}J C_1+C_1^{T}J A_1-B_1^{T}J B_1 + P^{T}Q - Q^{T} P=0$, therefore $C_1=\frac{1}{2}A_1 J^{-1}(B_1^{T}J B_1 - P^{T}Q + Q^{T} P)+A_1 J^{-1} Z_2
$ with symmetric $Z_3$.
\end{example}

Solutions of the equation (\ref{eqFYFIY}) with $\mathcal{C}=\mathcal{B}$ block diagonal have nice properties.

\begin{lemma}\label{lemauni}
Let $\mathbb{T}_{}^{\alpha,\mu}$ for $\alpha=(\alpha_1,\ldots,\alpha_n)$ with $\alpha_{1}>\ldots >\alpha_{N}$ and $\mu=(m_1,\ldots,m_N)$ be as 
in (\ref{0T0}), and let $\mathbb{X}_{}\subset \mathbb{T}_{}^{\alpha,\mu}$ be the set of solutions of the equation (\ref{eqFYFIY}) for 
$\mathcal{C}=\mathcal{B}=\mathcal{B}=\bigoplus_{r=1}^{N}\big(\bigoplus_{j=1}^{\alpha_r} (-1)^{j} B_{r}\big)$ with $B_r$ nonsingular symmetric or skew-symmetric.
Then $\mathbb{X}$ is a group, more precisely, a semidirect product
\[
\mathbb{X}_{}=\mathbb{O}_{}\ltimes \mathbb{V}_{}\subset \mathbb{T}_{}^{\alpha,\mu},
\]
in which $\mathbb{O}_{}$ consists of all $\mathcal{Q}=\bigoplus_{r=1}^{N}\big(\bigoplus_{j=1}^{\alpha_r} Q_{r}\big)$ for $Q_{r}\in \mathbb{C}^{m_r\times m_r}$ such that $
Q_{r}^{T}B_{r}Q_{r}=B_{r}:=[\mathcal{B}_{rr}]_{11}$, while any $\mathcal{V}\in \mathbb{V}_{}$ is unipotent and of the form
$\mathcal{V}=\prod_{j=0}^{n_{\mathcal{V}}}\mathcal{U}_j$, where  
$\mathcal{U}_0$ is of the form (\ref{asZ}), and $\mathcal{U}_1,\ldots,\mathcal{U}_{n_{\mathcal{V}}}$
are of the form (\ref{0T0}) with (\ref{Hptk}).
\end{lemma}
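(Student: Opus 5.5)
We will show that $\mathbb{X}$ is a group and then read off its structure from Lemma \ref{EqT}.

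\textbf{Step 1: $\mathbb{X}$ is a group.} Put $\mathcal{F}$ as in (\ref{BBF}). Equation (\ref{eqFYFIY}) with $\mathcal{C}=\mathcal{B}$ says that $\mathcal{X}$ preserves the nondegenerate bilinear form with matrix $\mathcal{F}\mathcal{B}$, because $\mathcal{F}^2=I$ gives
\[
(\mathcal{F}\mathcal{X}^{T}\mathcal{F})\mathcal{B}\mathcal{X}=\mathcal{B}\iff \mathcal{X}^{T}(\mathcal{F}\mathcal{B})\mathcal{X}=\mathcal{F}\mathcal{B}.
\]
The isometries of a form are closed under products and inverses, and $\mathbb{T}^{\alpha,\mu}$ is a group by Proposition \ref{lemanilpo}. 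Hence $\mathbb{X}=\mathbb{T}^{\alpha,\mu}\cap\{\mathcal{X}:\mathcal{X}^T\mathcal{F}\mathcal{B}\mathcal{X}=\mathcal{F}\mathcal{B}\}$ is a group.

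\textbf{Step 2: the semidirect product.} By Proposition \ref{lemanilpo} there is a homomorphism $\pi\colon\mathbb{T}^{\alpha,\mu}\to\mathbb{D}$ that keeps the block diagonal (the blocks $A_0^{rr}$) and has kernel $\mathbb{U}$. Set $\mathbb{V}:=\mathbb{X}\cap\mathbb{U}$; it is normal in $\mathbb{X}$ and consists of unipotent matrices.

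For $\mathcal{X}\in\mathbb{X}$, Lemma \ref{EqT}(\ref{EqT2}) gives $Q_r:=A_0^{rr}$ with $Q_r^TB_rQ_r=B_r$. Let $\mathcal{Q}:=\pi(\mathcal{X})$. Since $\mathcal{Q}$ is a direct sum of $\alpha_r$ copies of each $Q_r$, it commutes with $\mathcal{F}$ and satisfies $\mathcal{Q}^T\mathcal{B}\mathcal{Q}=\mathcal{B}$, so $\mathcal{Q}\in\mathbb{O}\subset\mathbb{X}$. Then $\mathcal{Q}^{-1}\mathcal{X}\in\mathbb{V}$. Because $\mathbb{O}\cap\mathbb{V}=\{I\}$, this gives $\mathbb{X}=\mathbb{O}\ltimes\mathbb{V}$.

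\textbf{Step 3: generating $\mathbb{V}$.} This is the main obstacle. We use the recursive parametrisation of Lemma \ref{EqT} and peel off factors by induction on the order in which entries are computed there.

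First, take $\mathcal{V}\in\mathbb{V}$ and clear its blocks below the diagonal. Order the pairs $(t,p)$ with $t>p$ and the superdiagonal index $k$. For each, multiply $\mathcal{V}$ on the left by the inverse of a matrix of type (\ref{Hptk}), chosen to kill the lowest nonzero entry $F$ of $\mathcal{V}_{tp}$. For this we must check that (\ref{Hptk}) actually solves (\ref{eqFYFIY}). Because of Step 1, it suffices to check the blocks in rows and columns $p,t$, where the equation reduces to a $2\times2$ block problem. The Catalan coefficients $a_n$ come from solving $V^TBV+\dots=B$ recursively, that is, from the square root of $1-4x$. Alternatively, Lemma \ref{EqT} gives existence of a solution with free data $F$ and all $Z_j^r=0$, and one checks that this solution is the displayed one.

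Each such multiplication changes only entries that come later in the ordering of Lemma \ref{EqT}. Induction therefore yields $\mathcal{V}'=\mathcal{U}_{n}^{-1}\cdots\mathcal{U}_1^{-1}\mathcal{V}\in\mathbb{V}$ with zero blocks below the diagonal.

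Next, by Lemma \ref{EqT}(\ref{EqT3ca}), the blocks above the diagonal of $\mathcal{V}'$ are determined by the lower ones via (\ref{eqATB1}), whose right-hand sides now vanish, so $\mathcal{V}'$ is block diagonal. Its diagonal blocks are unitriangular Toeplitz and solve (\ref{eqATB2}). That recursion coincides with (\ref{asZ}) for suitable $Z_j^r$, so $\mathcal{V}'=\mathcal{U}_0$. This gives $\mathcal{V}=\mathcal{U}_1\cdots\mathcal{U}_n\mathcal{U}_0$. Reordering the product, or absorbing the order into the indexing, yields the stated form.
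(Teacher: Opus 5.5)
Your Steps 1 and 2 are correct. The elimination scheme in Step 3 is in substance the argument the paper delegates to \cite[Lemma 4.2]{TSOS}.

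\textbf{The gap.} What the paper actually writes out is the part you skip: that the matrices (\ref{0T0}) with (\ref{Hptk}) solve (\ref{eqFYFIY}). The paper does this through a recursion for the $V_j$, solved by generating functions. Your induction needs this membership at every step, since otherwise $\mathcal{H}^{-1}\mathcal{V}\notin\mathbb{V}$. Both of your routes end in a square-root heuristic or ``one checks''.

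This check is not routine, because the alternating diagonal of $\mathcal{B}$ produces signs. Take the matrix (\ref{exXC}) with $B_1=B_2=I$ and $\mathcal{B}$ as in the lemma. The $(4,6)$ block of $\mathcal{X}^{T}\mathcal{F}\mathcal{B}\mathcal{X}$ is $2F^{T}$, whereas that of $\mathcal{F}\mathcal{B}$ is $0$. So this matrix is not in $\mathbb{X}$.

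Solving the $2\times 2$ block problem in general (single lower datum $N^{k}_{\alpha_t}(F)$, all $A_0^{rr}=I$, $Z=0$) gives the following:
\begin{itemize}
\item in a matrix of the shape (\ref{Hptk}), the $(p,t)$ block must be $N^{k}_{\alpha_t}\bigl((-1)^{k+1}B_p^{-1}F^{T}B_t\bigr)$;
\item the $p$-th diagonal block is $\sqrt{I-(-1)^{k}N^{d}_{\alpha_p}(w)}$, with $w=B_p^{-1}F^{T}B_tF$ and $d=2k+\alpha_p-\alpha_t$, analogously for the $t$-th block, and this holds for either parity of $\alpha_p-\alpha_t$.
\end{itemize}
So your square-root heuristic is the right mechanism, but it has to be derived. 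Your claim that the Lemma \ref{EqT} solution ``is the displayed one'' does not hold in all cases.

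\textbf{The repair.} Use your own alternative without that final identification, together with a correct elimination order. ``The ordering of Lemma \ref{EqT}'' does not order the lower blocks at all; they are all free.

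Give the $i$-th block row/column of an $\alpha_r$-block the weight $\alpha_r+1-2i$. Then:
\begin{itemize}
\item the $j$-th superdiagonal of $\mathcal{T}_{rs}$ has weight $|\alpha_r-\alpha_s|+2j$;
\item weights add under multiplication;
\item $\Theta_s:=\bigoplus_r \mathrm{diag}(s^{\alpha_r+1-2i})_{i=1}^{\alpha_r}\otimes I_{m_r}$ normalizes $\mathbb{T}^{\alpha,\mu}$ and satisfies $\Theta_s^{T}\mathcal{F}\mathcal{B}\Theta_s=\mathcal{F}\mathcal{B}$.
\end{itemize}
Let $\mathcal{H}_{t,p,k}(F)$ be the solution from Lemma \ref{EqT} whose only nonzero lower datum is $N^{k}_{\alpha_t}(F)$ in block $(t,p)$, with $A_0^{rr}=I$ and all $Z_j^{r}=0$. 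It lies in $\mathbb{V}$ automatically. By uniqueness in Lemma \ref{EqT}, $\Theta_s\mathcal{H}_{t,p,k}(F)\Theta_s^{-1}=\mathcal{H}_{t,p,k}(s^{d}F)$. Hence $\mathcal{H}_{t,p,k}(F)-I$ has components only in weights from $d\mathbb{N}$, and its weight-$d$ lower part is the datum alone.

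It follows that left multiplication by $\mathcal{H}_{t,p,k}(F)^{-1}$ leaves lower entries of weight $<d$ unchanged and alters those of weight $d$ only at $(t,p,k)$. Clearing lower entries in order of increasing weight $\alpha_p-\alpha_t+2k$ therefore works.

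A naive order does not. For $\alpha=(3,2,1)$, killing the entry $F$ of block $(3,2)$ from the left adds $-FA_0^{21}$ to the only entry of block $(3,1)$. So an order that clears $(3,1)$ before $(3,2)$ breaks down.

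The rest of your Step 3 is fine: the upper blocks vanish once the lower ones do, and the diagonal blocks are of the form (\ref{asZ}). One small point: the product $\mathcal{U}_1\cdots\mathcal{U}_n\mathcal{U}_0$ cannot simply be reordered, but eliminating by right multiplications puts $\mathcal{U}_0$ first.
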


To prove the lemma we use ideas from the \cite[Lemma 4.2]{TSOS}, 
with the distinction that, in the present case, the diagonal blocks of $\mathcal{B}$ are alternating block diagonal.

\begin{proof}[Sketch of the proof of Lemma \ref{lemauni}]
It is immediate that matrices of the form (\ref{asZ})
are among solutions of the equation.
We now prove that matrices $\mathcal{T}=[\mathcal{T}_{rs}]_{r,s=1}^N$ of the form (\ref{0T0}) with (\ref{Hptk})
solve the equation
precisely when $\mathcal{T}_{rr}=T(I,V_1^r,\ldots,V_{\alpha_j}^r)$ with $r\in \{t,p\}$ and the following equations for $n\in \{1, \ldots,\alpha_r-1\}$, $r\in \{t,p\}$ are satisfied:
\small
\begin{align*}
& B_r V_j^r+\sum_{j=1}^{n-1}(-1)^{j-1}(V_{j-1}^r)^TB_r V_1^r
+(-1)^{j}(V_j^r)^TB_r=
\left\{
\begin{array}{ll}
-F^T B_t F, & j=2k+\alpha_p-\alpha_t, \,\, r=p\\
-B_t F B_p^{-1} F B_t, & j=2k+\alpha_p-\alpha_t,\,\, r=t \\
0,             & \textrm{otherwise}
\end{array}
\right..
\end{align*}
\normalsize
These equations are 
of the form $A^TV_n+(-1)^{n}V_n^T A=B$ with given $A,B$, and we take a particular solution $V_n=\frac{1}{2}(A^{T})^{-1}B$. We subsequently solve the above equations
to get $V_1$, $V_2$, \ldots $V_{\alpha_r-1}$.
\normalsize
In view of \cite[Lemma 4.2]{TSOS} one can easily observe that
\begin{align}
&
\qquad V_{j}^{r}:=\left\{\begin{array}{ll}
\hspace{-1mm}a_{n-1}B_{p}^{-1}(F^{T}B_{t}FB_{p}^{-1})^{n}B_p^{}, & j=n(2k+\alpha_p-\alpha_t),\,\, r=p\\
\hspace{-1mm} a_{n-1}B_{t}^{-1}(B_{t}FB_{p}^{-1}F^{T})^{n}B_{t}, & j=n(2k+\alpha_t-\alpha_t),\,\, r=t\\
\hspace{-1mm} 0,                      & \textrm{otherwise}
\end{array}
\right.\hspace{-1mm},\nonumber
\end{align}
where $a_n$ has the initial term $a_0:= -\frac{1}{2}$ and satisfies the functional equation: 
\begin{align*}
a_n  =  -\frac{1}{2} \sum_{j=1}^{n}(-1)^{j(2k+\alpha_p-\alpha_t)}a_{j-1} a_{n-j}
= \frac{1}{2} (-1)^{\alpha_p-\alpha_t+1}\sum_{j=0}^{n-1}(-1)^{j (\alpha_p-\alpha_t)}a_{j} a_{n-1-j}, \quad n\geq 1.
\end{align*}
Let the generating function associated with $a_n$ be $f(t):=\sum_{j=0}^{\infty}a_jt^{j}$. For $\alpha_p-\alpha_t$ even we obtain the equation $f(t)=-\frac{1}{2}t(f(t))^2-\frac{1}{2}$, which yields $f(t)=-\frac{1}{t}\big(1-(1-t)^{\frac{1}{2}}\big)$ and $a_{n}=-\frac{1}{2^{2n+1}}\frac{1}{n+1}{\binom{2n}{n}}$. For $\alpha_p-\alpha_t$ odd we get $f(t)=-\frac{1}{2}tf(t)f(-1)-\frac{1}{2}$, hence $f(t)+f(-t)=-1$ and $f(t)=\frac{1}{2}tf(t)(1+f(1))-\frac{1}{2}$, and eventually $f(t)=-\frac{1}{2t}\big(2-t-(t^2+4)^{\frac{1}{2}}\big)$ with $a_{2n}=0$ and $a_{2n+1}=\frac{(-1)^{n+1}}{2^{4n+2}(n+1)}{\binom{2n}{n}}$ for $n\geq 1$.

The rest of the proof proceeds mutatis mutandis as in \cite[Lemma 4.2]{TSOS}.
\end{proof}
\section{Proofs of the main results}\label{sec2}

Before proving Theorems \ref{stabw1} and \ref{stabw2}, we recall the classical result \cite[Ch. VIII]{Gant} on the 
solutions of a homogeneous Sylvester equation.

\begin{proposition} \label{resAoXXA} 
Suppose $\mathcal{J}$ is the Jordan canonical form (with direct summands of the form (\ref{Jblock})) and let us consider the following matrix equation:
\begin{align}\label{eqJYYJ}
\mathcal{J}X=X\mathcal{J}
\end{align}  
\begin{enumerate}[label={\arabic*.},ref={\arabic*},
leftmargin=20pt]
\item \label{resAoXXA1}
Let further $\lambda_1,\ldots,\lambda_n\in \mathbb{C}$ be pairwise distinct eigenvalues of $\mathcal{J}=\bigoplus_{j=1}^{n}\mathcal{J}_j$,
in which $\mathcal{J}_j$ consists of all summands that correspond to the eigenvalue $\lambda_j$.
Then the solution of (\ref{eqJYYJ}) is of the form $Y=\bigoplus_{j=1}^{n} X_j$, with $X_j$ a solution of $\mathcal{J}_j X_j=X_j\mathcal{J}_j$.

\item \label{resAoXXA2}
Assume $\mathcal{J}=\bigoplus_{r=1}^{N}\big(\bigoplus_{j=1}^{m_r} J_{\alpha_r}(\lambda)\big)$, $\lambda \in \mathbb{C}$ (i.e. $\mathcal{J}$ has precisely one eigenvalue $\lambda$) and $\alpha_1>\ldots>\alpha_N$. Then the solution of (\ref{eqJYYJ}) is $X=[X_{rs}]_{r,s=1}^{N}$, where every block $X_{rs}$ is further an $m_r$-by-$m$ block matrix with $\alpha_r$-by-$ \alpha_s$ blocks of the form
\begin{equation}\label{QTY}
\left\{\begin{array}{ll}
\begin{bmatrix} 
0 & T
\end{bmatrix}, & \alpha_r<\alpha_s \\
\begin{bmatrix}
T\\
0
\end{bmatrix}, & \alpha_r>\alpha_s\\
T, & \alpha_r=\alpha_s
\end{array}
\right.,
\end{equation}
with $T$ upper triangular Toeplitz matrix of size $b_{rs}\times b_{rs}$; $b_{rs}:=\min\{\alpha_r,\alpha_s\}$. 
\end{enumerate}
\end{proposition}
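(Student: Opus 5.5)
The plan is to reduce part 1 to part 2, and to prove part 2 by a nilpotent shift argument, Toeplitz-structure, and the Jordan block ordering.

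For part 1, write $X=[X_{jk}]$ conformally with $\mathcal{J}=\bigoplus_j \mathcal{J}_j$. Then (\ref{eqJYYJ}) splits blockwise into $\mathcal{J}_jX_{jk}=X_{jk}\mathcal{J}_k$. For $j\neq k$ the spectra of $\mathcal{J}_j$ and $\mathcal{J}_k$ are disjoint. Hence the Sylvester operator $Y\mapsto \mathcal{J}_jY-Y\mathcal{J}_k$ is injective, since its eigenvalues are the differences $\lambda_j-\lambda_k\neq 0$. Equivalently, one can iterate: $p(\mathcal{J}_j)X_{jk}=X_{jk}p(\mathcal{J}_k)$ for every polynomial $p$. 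Taking $p$ to be the characteristic polynomial of $\mathcal{J}_j$ gives $0=X_{jk}p(\mathcal{J}_k)$ with $p(\mathcal{J}_k)$ invertible, so $X_{jk}=0$. Only the diagonal blocks remain, and each solves its own equation.

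For part 2, subtract $\lambda I$ from both sides, so we may assume $\lambda=0$ and $\mathcal{J}$ is nilpotent. Split $X$ into $\alpha_r\times\alpha_s$ blocks $Y$, one for each pair of single Jordan blocks. Each block then satisfies $J_{\alpha_r}(0)Y=YJ_{\alpha_s}(0)$. This is the core computation, done entrywise. Left multiplication by $J_{\alpha_r}(0)$ shifts rows up, and right multiplication by $J_{\alpha_s}(0)$ shifts columns right. So the equation reads $y_{i+1,k}=y_{i,k-1}$, with the conventions $y_{\alpha_r+1,k}=0$ and $y_{i,0}=0$.
\begin{itemize}
\item Hence $Y$ is constant along diagonals, i.e.\ it is Toeplitz.
\item The first column satisfies $y_{i+1,1}=0$ for $i\ge1$.
\item The last row satisfies $y_{\alpha_r,k-1}=0$ for $k\le\alpha_s$.
\end{itemize}
Propagating these zeros along diagonals shows that the only nonzero diagonals are those starting in the first row, at column $\ge \alpha_s-\alpha_r+1$ when $\alpha_s>\alpha_r$. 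This yields exactly the shapes in (\ref{QTY}): $[0\ T]$ if $\alpha_r<\alpha_s$, $\begin{bsmallmatrix}T\\0\end{bsmallmatrix}$ if $\alpha_r>\alpha_s$, and $T$ square otherwise, with $T$ upper triangular Toeplitz of size $b_{rs}$. Conversely, every such matrix satisfies the recursion, so it is a solution.

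The main obstacle is purely bookkeeping: checking the index ranges carefully in the two rectangular cases so that the zero block lands on the correct side. I would verify this by treating $\alpha_r<\alpha_s$ explicitly and obtaining the other case by transposing and conjugating with exchange matrices. Finally, grouping the blocks with equal $\alpha_r$ gives the stated $m_r\times m_s$ block arrangement.
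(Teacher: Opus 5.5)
Your proof is correct. The characteristic-polynomial argument kills the blocks between distinct eigenvalues, and the entrywise relation $y_{i+1,k}=y_{i,k-1}$, with the conventions $y_{\alpha_r+1,k}=0$ and $y_{i,0}=0$, forces exactly the shapes in (\ref{QTY}); the converse check is routine.

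The paper gives no proof of its own here and simply cites the classical result \cite[Ch.~VIII]{Gant}. Your argument is essentially the standard one behind that citation, so it is a self-contained version of what the paper takes for granted.
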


Proposition \ref{resAoXXA} (\ref{resAoXXA2}) remains valid also if we replace $J_{\alpha_r}(\lambda)$ by $\exp({J_{\alpha_r}(\lambda)})$.

\begin{lemma}\label{resAoXXAo} 
Assume $\mathcal{J}=\bigoplus_{r=1}^{N}\big(\bigoplus_{j=1}^{m_r} \exp({J_{\alpha_r}(\lambda)})\big)$, $\lambda \in \mathbb{C}$.
Then the solutions $X$ of the equation $\mathcal{J}X=X\mathcal{J}$ coincide with those described in Proposition \ref{resAoXXA} (\ref{resAoXXA2}). 
\end{lemma}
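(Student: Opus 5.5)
The plan is to reduce the statement to Proposition \ref{resAoXXA} (\ref{resAoXXA2}) through the observation that a matrix commutes with $\exp(M)$ if and only if it commutes with $M$, provided $M$ is nilpotent. Write $\mathcal{J}=\bigoplus_{r=1}^{N}\big(\bigoplus_{j=1}^{m_r}\exp(J_{\alpha_r}(\lambda))\big)$. Since $J_{\alpha_r}(\lambda)=\lambda I+J_{\alpha_r}(0)$ and these two summands commute, we get $\exp(J_{\alpha_r}(\lambda))=e^{\lambda}\exp(J_{\alpha_r}(0))$. Hence $\mathcal{J}=e^{\lambda}\exp(\mathcal{N})$ with $\mathcal{N}:=\bigoplus_{r}\bigoplus_{j}J_{\alpha_r}(0)$ nilpotent. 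Because $e^{\lambda}\neq 0$, the equation $\mathcal{J}X=X\mathcal{J}$ is equivalent to $UX=XU$ with $U:=\exp(\mathcal{N})=I+\mathcal{M}$, where $\mathcal{M}:=\sum_{k\geq 1}\frac{1}{k!}\mathcal{N}^k$ is nilpotent.

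The key step is to show that $X$ commutes with $\mathcal{M}$ if and only if $X$ commutes with $\mathcal{N}$.

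\emph{If direction.} Suppose $X$ commutes with $\mathcal{N}$. Then $X$ commutes with every power $\mathcal{N}^k$, and therefore with the finite sum $\mathcal{M}$.

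\emph{Only-if direction.} Here I will invert the series using the logarithm. Since $\mathcal{M}$ is nilpotent, the sum
\[
\log(I+\mathcal{M})=\sum_{k\geq 1}\frac{(-1)^{k+1}}{k}\mathcal{M}^k
\]
is finite. The formal power series identity $\log(\exp(t))=t$ can be specialized to the nilpotent matrix $\mathcal{N}$, because only finitely many terms survive. This gives $\mathcal{N}=\log(I+\mathcal{M})$, a polynomial in $\mathcal{M}$. So if $X$ commutes with $\mathcal{M}$, it commutes with $\mathcal{N}$.

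Combining the two directions, the solution set of $\mathcal{J}X=X\mathcal{J}$ equals that of $\mathcal{N}X=X\mathcal{N}$. Adding $\lambda I$ to $\mathcal{N}$ does not change the commutant, so this equals the solution set of $\big(\bigoplus_r\bigoplus_j J_{\alpha_r}(\lambda)\big)X=X\big(\bigoplus_r\bigoplus_j J_{\alpha_r}(\lambda)\big)$. Proposition \ref{resAoXXA} (\ref{resAoXXA2}) identifies this set as the block matrices with the rectangular upper triangular Toeplitz blocks (\ref{QTY}).

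The main point to justify is that the formal identity $\log\exp=\mathrm{id}$ may be used for nilpotent matrices. One way is the substitution argument above, since both series truncate. Alternatively, one can note that $\mathcal{M}=\mathcal{N}\,g(\mathcal{N})$, where $g(t)=(e^t-1)/t$ has $g(0)=1$, so $g(\mathcal{N})$ is invertible. Then $\mathcal{N}$ is a polynomial in $\mathcal{M}$ by the standard inversion of a power series with nonzero linear term. Either argument is routine, so I expect no serious obstacle.
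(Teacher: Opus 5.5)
Your proof is correct, but it follows a different route from the paper's.

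The paper argues block by block. Using $\exp(J_{\alpha}(\lambda))=e^{\lambda}\exp(J_{\alpha}(0))$ and the similarity $\exp(J_{\alpha}(0))=S_{\alpha}J_{\alpha}(1)S_{\alpha}^{-1}$, it conjugates the solution space of $\exp(J_{\alpha_r}(0))Y=Y\exp(J_{\alpha_s}(0))$ to that of $J_{\alpha_r}(1)\widetilde{Y}=\widetilde{Y}J_{\alpha_s}(1)$. By Proposition~\ref{resAoXXA}, this space has dimension $b_{rs}=\min\{\alpha_r,\alpha_s\}$. It then notes that every matrix of the form (\ref{QTY}) solves the exponential equation, and concludes by comparing dimensions.

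You instead show directly that the two commutants coincide. The matrix $\exp(\mathcal{N})-I$ is a polynomial in $\mathcal{N}$, and conversely $\mathcal{N}=\log(\exp(\mathcal{N}))$ is a polynomial in $\exp(\mathcal{N})$. Your truncation of $\log\circ\exp$ is justified because $t\mapsto\mathcal{N}$ induces a ring homomorphism $\mathbb{C}[t]/(t^{n})\to\mathbb{C}[\mathcal{N}]$ when $\mathcal{N}^{n}=0$.

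What each route buys:
\begin{itemize}
\item Your argument needs neither the Jordan form of $\exp(J_{\alpha}(0))$ nor a dimension count, and it treats the whole matrix $X$ at once instead of each rectangular block.
\item It also extends immediately: $X$ commutes with $f(\lambda I+\mathcal{N})$ if and only if it commutes with $\mathcal{N}$, whenever $f$ is analytic at $\lambda$ with $f'(\lambda)\neq 0$. This covers, for example, the Cayley transforms mentioned in Section~\ref{secIG}.
\item The paper's dimension argument is tied to the specific similarity $\exp(J_{\alpha}(0))\sim J_{\alpha}(1)$, though it fits naturally with the Sylvester-equation framework used elsewhere in the paper.
\end{itemize}
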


\begin{proof}
Since $\exp({J_{\alpha}(\lambda)})=e^{\lambda }\exp({J_{\alpha}(0)})
$ and $\exp({J_{\alpha}(0)})=S_{\alpha}J_{\alpha}(1)S_{\alpha}^{-1}$ for some transition matrix $S_{\alpha}$, the equation $\exp({J_{\alpha_r}(\lambda)})Y =Y \exp({J_{\alpha_s}(\lambda)})$ transforms to
\begin{align}
\label{eJ0XXeJ0}
&\exp({J_{\alpha_r}(0)})Y =Y \exp({J_{\alpha_s}(0)}),\\
\label{J1XXJ1}
&J_{\alpha_r}(1)\widetilde{Y} =\widetilde{Y} J_{\alpha_s}(1), \qquad \quad \widetilde{Y}=S^{-1}_{\alpha_r}YS_{\alpha_s}.
\end{align}
By Proposition \ref{resAoXXA} (\ref{resAoXXA2}) every solution $\widetilde{Y}$ of (\ref{J1XXJ1}) is of the form (\ref{QTY}), 
so the dimension of the solution space of (\ref{J1XXJ1}) (as well as (\ref{eJ0XXeJ0})) is $b_{rs}=\min\{\alpha_r,\alpha_s\}$. On the other hand, since upper triangular Toeplitz matrices commute, it is easy to verify that any matrix of the form (\ref{QTY}) solves (\ref{eJ0XXeJ0}).
Since these solutions form a subspace of (maximal) dimension $b_{rs}$, it is thus equal to the whole solution space. 
\end{proof}

We also recall a technical lemma \cite[Lemma 3.3 (1)]{TSOC} (cf. \cite[Sec. 3.1]{Lin} and \cite[Sec. 2]{TSOS}), which enables us to transform solutions of a Sylvester equation (block matrices with upper triangular Toeplitz blocks) into block upper triangular Toeplitz matrices of the form (\ref{0T0}). It relies on the permutation matrices
\begin{equation}\label{perS}
\Omega_{\alpha,m}:=\left[e_1\;e_{\alpha+1}\;\ldots\;e_{(m-1)\alpha+1}\;e_2\;e_{\alpha+2}\;\ldots\;e_{(m-1)\alpha+2}\;\ldots\;e_{\alpha}\;e_{2\alpha}\;\ldots\;e_{\alpha m}\right],
\end{equation}
where $e_1,
\ldots,e_{\alpha m}$ is the standard orthonormal basis in $\mathbb{C}^{\alpha m}$. Post-multiplication by $\Omega_{\alpha,m}$ (pre-multiplication by $\Omega_{\alpha,m}^{T}$) puts the $k$-th, the $(\alpha+k)$-th, \ldots, the $((m-1)\alpha+k)$-th column (row) together for all $ k \in \{1,\ldots, \alpha\}$. For example:
\[
\Omega_{3,2}^{T}\begin{bmatrix}[cc|cc|cc]
a_1 & b_1 & a_2 & b_2 & a_3 & b_3 \\
0   & a_1 & 0   & a_2 & 0   & a_3\\
0   & 0   & 0   & 0   & 0   &  0 \\
\hline
a_4 & b_4 & a_5 & b_5 & a_6 & b_6 \\
0   & a_4 & 0   & a_5 & 0   & a_6\\
0   & 0   & 0   & 0   & 0   &  0 
\end{bmatrix}\Omega_{2,3}
=
\begin{bmatrix}[ccc|ccc]
a_1 & a_2 & a_3 & b_1 & b_2 & b_3 \\
a_4 & a_5 & a_6   & b_4 & b_5   & b_6\\
\hline
0   & 0   & 0   & a_1   & a_2   &  a_3 \\
0 &  0 &   0 & a_4 & a_5 & a_6 \\
\hline
0   & 0 & 0   & 0 & 0   & 0\\
0   & 0   & 0   & 0   & 0   &  0 
\end{bmatrix}.
\]

\begin{lemma}\label{lemaP}
Suppose
$X=[X_{rs}]_{r,s=1}^{N}$ is an $N$-by-$N$ block matrix whose block $X_{rs}=[(X_{rs})_{jk}]_{j,k=1}^{m_r,m_s}$ is an $m_r$-by-$m_s$ block matrix with blocks of size $\alpha_r\times \alpha_s$, and such that
\[
(X_{rs})_{jk}=
\left\{
\begin{array}{ll}
[0\quad T_{jk}^{rs}], & \alpha_{r}<\alpha_{s}\\
\begin{bmatrix}
T_{jk}^{rs}\\
0
\end{bmatrix}, & \alpha_{r}>\alpha_{s}\\
T_{jk}^{rs},& \alpha_{r}=\alpha_{s}
\end{array}\right., \qquad
\begin{array}{l}
T_{jk}^{rs}=T(a_{0,jk}^{rs},a_{1,jk}^{rs},\ldots,a_{b_{rs}-1,jk}^{rs})\in \mathbb{C}^{b_{rs}\times b_{rs}},\\
\\
b_{rs}:=\min \{\alpha_r,\alpha_s\}.
\end{array}
\]
Set $\Omega=\bigoplus_{r=1}^{N}\Omega_{\alpha_r,m_r}$, in which $\Omega_{\alpha_r,m_r}$ are defined by (\ref{perS}). Then 
\vspace{-1mm}
\begin{align}\label{0T02}
&\mathcal{X}:=\Omega^{T}X\Omega, \qquad \mathcal{X}=[\mathcal{X}_{rs}]_{r,s=1}^{N},\quad
\mathcal{X}_{rs}=
\left\{
\begin{array}{ll}
[0\quad \mathcal{T}_{rs}], & 
\alpha_r<\alpha_s\\
\begin{bmatrix}
\mathcal{T}_{rs}\\
0
\end{bmatrix}, & \alpha_r>\alpha_s\\
\mathcal{T}_{rs},& \alpha_r=\alpha_s
\end{array}\right.,
\end{align}
where $\mathcal{X}_{rs}$ is of size $\alpha_r \times \alpha_s$ and $\mathcal{T}_{rs}=
T(A_0^{rs},\ldots,A_{b_{rs}-1}^{rs})$ 
with $ A_{n}^{rs}:=[a_{n,jk}^{rs}]_{j,k=1}^{m_r,m_s}$.
\end{lemma}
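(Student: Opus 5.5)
The statement is Lemma \ref{lemaP}: a permutation similarity by $\Omega$ turns a block matrix of Toeplitz blocks (as produced by Proposition \ref{resAoXXA}) into a block upper triangular Toeplitz matrix of the form (\ref{0T02}). I would prove it entrywise, reading $\Omega_{\alpha,m}$ as a reindexing of rows and columns.

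\textbf{Step 1 (the permutation as a bijection).} Index the rows of the $r$-th block row of $X$ by pairs $(j,u)$, with $j\in\{1,\ldots,m_r\}$ the position among the $m_r$ blocks and $u\in\{1,\ldots,\alpha_r\}$ the position inside a block. The row $(j,u)$ is the row number $(j-1)\alpha_r+u$. From (\ref{perS}), the $((u-1)m_r+j)$-th column of $\Omega_{\alpha_r,m_r}$ is $e_{(j-1)\alpha_r+u}$.

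Hence conjugation by $\Omega_{\alpha_r,m_r}$ sends index $(j,u)$ to index $(u,j)$. That is, $u$ becomes the outer index of an $\alpha_r$-by-$\alpha_r$ block structure with blocks of size $m_r$, and $j$ becomes the inner index. Since $\Omega$ is block diagonal, this gives
\[
(\Omega^{T}X\Omega)_{(r;u,j),(s;v,k)}=X_{(r;j,u),(s;k,v)}=\big((X_{rs})_{jk}\big)_{uv}.
\]

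\textbf{Step 2 (reading off the $(u,v)$ block).} Fix $r,s$ and $u\in\{1,\ldots,\alpha_r\}$, $v\in\{1,\ldots,\alpha_s\}$. The $(u,v)$ block (of size $m_r\times m_s$) of $\mathcal{X}_{rs}$ is the matrix $[((X_{rs})_{jk})_{uv}]_{j,k}$. Consider first $\alpha_r=\alpha_s$. Then $((X_{rs})_{jk})_{uv}$ equals $a^{rs}_{v-u,jk}$ if $v\geq u$, and $0$ otherwise. So the $(u,v)$ block is $A^{rs}_{v-u}$ for $v\ge u$ and zero below the diagonal, which is exactly $T(A_0^{rs},\ldots,A_{b_{rs}-1}^{rs})$.

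\textbf{Step 3 (rectangular cases).} For $\alpha_r<\alpha_s$, set $d=\alpha_s-\alpha_r$. The entry $((X_{rs})_{jk})_{uv}$ is zero for $v\le d$ and equals $(T^{rs}_{jk})_{u,v-d}$ otherwise. Applying Step 2 to the shifted column index gives $[0\ \ \mathcal{T}_{rs}]$, where the zero part has $d$ block columns. For $\alpha_r>\alpha_s$ the zero rows sit below, and the same argument gives $\begin{bmatrix}\mathcal{T}_{rs}\\ 0\end{bmatrix}$.

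The blocks have the claimed sizes: $\mathcal{X}_{rs}$ is $\alpha_r\times\alpha_s$ in block units, with $m_r\times m_s$ blocks. Everything is a direct index computation, and the only point needing care is Step 1. I would check it against the displayed example with $\Omega_{3,2}$ and $\Omega_{2,3}$, verifying that the row permutation on the left uses $\Omega_{\alpha_r,m_r}$ and the column permutation uses $\Omega_{\alpha_s,m_s}$. This matters because $r\neq s$ gives possibly different pairs, but the bijection argument is uniform.
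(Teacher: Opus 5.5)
Your argument is correct and complete. The key identification is that the $((u-1)m_r+j)$-th column of $\Omega_{\alpha_r,m_r}$ is $e_{(j-1)\alpha_r+u}$. This gives $((\mathcal{X}_{rs})_{uv})_{jk}=((X_{rs})_{jk})_{uv}$, and the three shapes $[0\ \ \mathcal{T}_{rs}]$, $\begin{bsmallmatrix}\mathcal{T}_{rs}\\ 0\end{bsmallmatrix}$ and $\mathcal{T}_{rs}$ follow directly. The row and column conventions also agree with the displayed $\Omega_{3,2}^{T}(\cdot)\,\Omega_{2,3}$ example.

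The paper does not prove this lemma itself. It recalls the statement from \cite[Lemma 3.3 (1)]{TSOC} (cf. \cite[Sec. 3.1]{Lin}, \cite[Sec. 2]{TSOS}) and illustrates it with that example. Your direct index computation is the verification the citation stands in for, and it makes the lemma self-contained.
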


We outline the general approach used to prove our main results. Let $H$ be symmetric (skew-symmetric), 
and let $\mathcal{A}^{}$ be 
$H$-skew-symmetric ($H$-Hamiltonian) normal form with one distinct eigenvalue $\lambda\in \mathbb{C}$.
The isotropy group at $\mathcal{A}$ under $H$-orthogonal ($H$-symplectic) similarity 
consists of all $H$-orthogonal ($H$-symplectic) matrices $Q$ satisfying the Sylvester equation: 
\begin{equation}\label{HQQH}
\mathcal{A}^{}Q= Q\mathcal{A}^{},  \qquad \qquad (Q^T H Q=H).
\end{equation}

The first equation of (\ref{HQQH}) is transformed into a simpler form
\begin{equation}\label{JQQJ}
\mathcal{J}^{}(\mathcal{A})X= X\mathcal{J}^{}(\mathcal{A}), \quad  \quad
X=U Q U^{-1} \qquad \quad \quad
(\mathcal{A}=U^{-1}\mathcal{J}(\mathcal{A})U^{}),
\end{equation}
where $\mathcal{J}(\mathcal{A})$ is the Jordan canonical form for $\mathcal{A}$ and $U$ is a transition matrix. and 
Proposition \ref{resAoXXA} (\ref{resAoXXA2}) gives all solutions $X$ of (\ref{JQQJ}). 
Next, we determine which of these solutions provide $H$-orthogonal ($H$-symplectic) solutions $Q=U^{-1} X U$ (of the second equation (\ref{HQQH})). This yields an equation
\begin{align}\label{QK}
H  =                   & \big((U^{})^{T}X^{T} (U^{-1})^{T}\big) H  \big(  U^{-1} X U \big),  \qquad \qquad (H=Q^T H Q),\\
(U^{-1})^{T} H U^{-1}=  & X^{T} \big((U^{-1})^{T} H U^{-1}\big) X, \qquad\qquad (Q=U^{-1} X U).\nonumber
\end{align}
The core of the problem therefore lies in linking the solutions (\ref{JQQJ}) to the equation (\ref{QK}).
Eventually, it reduces to a special matrix equation involving matrices of the form (\ref{0T0}), which we may solve using Lemma \ref{EqT}.  
This part of the proof appears to be significantly more involved than in \cite{TSOC} and \cite{TSOS}.

Moreover, the isotropy groups of $H$-skew-symmetric ($H$-Hamiltonian) matrices under $H$-orthogonal ($H$-symplectic) similarity immediatelly give the centralizers of $H$-orthogonal ($H$-symplectic) group (cf. (\ref{eqAexp})). Indeed, since $\mathcal{J}(\mathcal{A})$ has one distinct eigenvalue $\lambda$, then in view of Proposition \ref{eJ0XXeJ0}, solutions of (\ref{JQQJ})
coincide with solutions of 
$\exp(\mathcal{J}(\mathcal{A}))X=X \exp(\mathcal{J}(\mathcal{A}))$. Further, $\mathcal{A}=U^{-1}\mathcal{J}(\mathcal{A})U^{}$ yields $\exp (\mathcal{A})=U^{-1}\exp (\mathcal{J}(\mathcal{A}))U^{}$, thus solutions of (\ref{HQQH}) give solutions of $(\varepsilon \exp(\mathcal{A}))Q= Q (\varepsilon \exp(\mathcal{A}))$, with $\varepsilon \in \{1,-1\}$.

\begin{proof}[Proof of Theorem \ref{stabw2}]
For $c=1$ or $c=2$, 
let $H$ and the $H$-skew-symmetric or $H$-Hamiltonian normal form $\mathcal{A}$, with eigenvalue $0$, be as in (\ref{defMH}):
\begin{align}
&\label{Hr}
H:=\bigoplus_{r=1}^{N} \big( \bigoplus_{j=1}^{m_r} 
H_{r}\big),
\qquad H_r=
\left\{
\begin{array}{ll}
\hspace{-1mm}
\begin{bsmallmatrix}
0 & I_{\alpha_r}\\
(-1)^{c} I_{\alpha_r} & 0
\end{bsmallmatrix} ,   &  \hspace{-1mm} 
c + \alpha_r \textrm{ odd}\\
\hspace{-1mm}
\Gamma_{\alpha_r},      &   \hspace{-1mm} 
c + \alpha_r \textrm{ even}
\end{array}
\right.,\\
\label{Kcase2}
&\mathcal{A}^{}=
\bigoplus_{r=1}^{N}\big(\bigoplus_{j=1}^{m_r} A_{r}\big), \quad A_{r}:=
\left\{
\begin{array}{ll}
\hspace{-1mm}J_{\alpha_r}(0)\oplus -(J_{\alpha_r}(0))^T,   &  \hspace{-1mm}  
c + \alpha_r \textrm{ odd}\\
\hspace{-1mm} J_{\alpha_r}(0),      &   \hspace{-1mm} 
c + \alpha_r \textrm{ even}
\end{array}
\right.\hspace{-2mm},
\end{align}
where $\Gamma_{\alpha_r}$ is defined by (\ref{Jblock}).
Next, the Jordan canonical form $\mathcal{J}(\mathcal{A})$ of $\mathcal{A}$ and its corresponding transition matrix $U$ (i.e. $\mathcal{A}=U^{-1}\mathcal{J}(\mathcal{A})U$) are:
\begin{align}\label{Jcase2}
\mathcal{J}(\mathcal{A})
& = \bigoplus_{r=1}^{N}\big( \bigoplus_{j=1}^{m_r} \mathcal{J}(\widetilde{A}_r) \big),
\quad \mathcal{J}(\widetilde{A}_r):=
\left\{
\begin{array}{ll}
\hspace{-1mm}J_{\alpha_r}(0) \oplus J_{\alpha_r}(0),   &  \hspace{-1mm} 
c + \alpha_r \textrm{ odd}\\
\hspace{-1mm}J_{\alpha_r}(0),             &   \hspace{-1mm} 
c + \alpha_r \textrm{ even}
\end{array}
\right.\hspace{-2mm},
\\
\label{Ur}
U
& =
\bigoplus_{r=1}^{N} \big( \bigoplus_{j=1}^{m_r} U_r \big), \qquad
U_r
:=
\left\{
\begin{array}{ll}
\hspace{-1.5mm} I_{\alpha_r}\oplus \Gamma_{\alpha_r}, & \hspace{-1.5mm} c + \alpha_r \textrm{ odd}\\
\hspace{-1.5mm} I_{\alpha_r}, &  \hspace{-1.5mm} c + \alpha_r \textrm{ even}
\end{array}
\right..
\end{align}

In view of (\ref{HQQH}) and (\ref{JQQJ}), the equation $\mathcal{A}Q=Q\mathcal{A}$ with $\mathcal{A}$ as in (\ref{Kcase2}) and with $H$-orthogonal or $H$-symplectic $Q$, transforms to $\mathcal{J}(\mathcal{A})X=X\mathcal{J}(\mathcal{A})$ for $X=U Q U^{-1}$ with $U$ as in (\ref{Ur}).
Proposition \ref{resAoXXA} (\ref{resAoXXA1}) yields the solution of this equation $X=[X_{rs}]_{r,s=1}^{N}$ with an $\mu_r$-by-$\mu_s$ block matrix $X_{rs}$ whose blocks are $\alpha_r$-by-$\alpha_s$ rectangular Toeplitz
of the form (\ref{QTY}), where 
\small
$
\mu_r:=\left\{
\begin{array}{ll}
2m_r,  & c + \alpha_r \textrm{ odd} \\
m_r, &  c + \alpha \textrm{ even}
\end{array}
\right.
$.
\normalsize

Next, $H$-orthogonality or $H$-symplecticity of $Q=U^{-1} X U$ is described by 
(\ref{QK}):
\begin{align}\label{QTQcase2}
(U^T)^{-1} H U^{-1} =   &  X^T \big((U^T)^{-1} H U^{-1}\big) X.
\end{align}
We shall now write $(U^{-1})^{T} H U^{-1}$ as a product of block diagonal matrices, 
one with diagonal and one with anti-diagonal blocks; this is the trick of the proof.

We observe that $\Gamma_{\alpha_r}^{T}=\Gamma_{\alpha_r}^{-1}=(-1)^{\alpha_r +1}\Gamma_{\alpha_r}^{}$ and we deduce
\begin{align}
\label{UTHU}
& ((I_{\alpha_r} \oplus \Gamma_{\alpha_r} )^{-1})^{T} 
\begin{bmatrix}
0 & I_{\alpha_r}\\
(-1)^{c+1} I_{\alpha_r}& 0 
\end{bmatrix}
(I_{\alpha} \oplus \Gamma_{\alpha_r})^{-1} 
= 
\begin{bmatrix}
0 &  (-1)^{\alpha_r -1} \Gamma_{\alpha_r}\\
(-1)^{c+1} \Gamma_{\alpha_r} & 0 
\end{bmatrix}= \\
=
(E_{\alpha_r} \oplus  & E_{\alpha_r}) 
\begin{bmatrix}
0 & (-1)^{\alpha_r -1} E_{\alpha_r}\Gamma_{\alpha_r}\\
(-1)^{c+1} E_{\alpha_r}\Gamma_{\alpha_r} & 0 
\end{bmatrix}
= 
(E_{\alpha_r} \oplus E_{\alpha_r})
\begin{bmatrix}
0 & -F_{\alpha_r}\\
(-1)^{c+\alpha_r+1} F_{\alpha_r} & 0 
\end{bmatrix},\nonumber
\end{align}
with the exchange matrix and the diagonal matrix with alternating $\pm 1$ denoted by:
\begin{equation}
\label{EF}
E_{\alpha_r}:=\begin{bmatrix}
0  &                &      & 1\\
 &              &   1     &  \\
   &  \iddots  &  &  \\
1  &           &     & 0 \\
\end{bmatrix}, \qquad		
  F_{\alpha_r}:=\begin{bmatrix}
                 -1    &       &     & 0    \\
						           & 1     &     &     \\     
						           &       & \ddots &       \\
                 0     &       &     & (-1) ^{\alpha-1}  
                                   \end{bmatrix},
																										\qquad						(\alpha_r\textrm{-by-}\alpha_r),
\end{equation} 
and where $\Gamma_{\alpha_r}=(-1)^{\alpha_r} E_{\alpha_r} F_{\alpha_r}$.
By combining (\ref{Hr}) and (\ref{Ur}) with (\ref{UTHU}), we get
\[
(U_r^{-1})^T H_r (U_r^{-1})=\left\{
\begin{array}{ll}
(E_{\alpha_r} \oplus E_{\alpha_r})
\begin{bmatrix}
0 & -F_{\alpha_r}\\
 F_{\alpha_r} & 0 
\end{bmatrix}, & c + \alpha_r \textrm{ odd}\\
E_{\alpha_r} ((-1)^{\alpha} F_{\alpha_r}), & c + \alpha_r \textrm{ even}
\end{array}
\right..
\]
We can now write
\begin{align}
\label{U1TU1}
& (U^{-1})^{T} H U^{-1}  = ED, \qquad \qquad 
\Big(\mu_r=\left\{
\begin{array}{ll}
2m_r,  & c + \alpha_r \textrm{ odd} \\
m_r, & c + \alpha \textrm{ even}
\end{array}
\right.\Big),\\
E:=\bigoplus_{r=1}^{N}\big(\bigoplus_{j=1}^{\mu_r}  E_{\alpha_r}\big), & \qquad
D:=\bigoplus_{r=1}^{N}\big(\bigoplus_{j=1}^{m_r} 
D_r \big),
\quad
D_r:=\left\{
\begin{array}{ll}
\begin{bsmallmatrix}
0 & -F_{\alpha_r}\\
F_{\alpha_r} & 0 
\end{bsmallmatrix}, &  c + \alpha_r \textrm{ odd}\\
(-1)^{\alpha_r} F_{\alpha_r}, &  c + \alpha_r \textrm{ even}
\end{array}
\right..\nonumber
\end{align}

B applyng (\ref{U1TU1}), the equation (\ref{QTQcase2}) transforms to
%
\begin{align}\label{QTQ2c}
E D = &  E  X^{T}(E D) X\\
D = & ( E  X^{T} E ) D X.\nonumber
\end{align}
We conjugate (\ref{QTQ2c}) with $\Omega=\bigoplus_{r=1}^{N}\Omega_{\alpha_r,\mu_r}$ (see Lemma \ref{lemaP}) and manipulate it:
\begin{align}\label{ortoD2}
 \Omega^{T} D\Omega = & \big(\Omega^{T}E\Omega \big)\big(\Omega^{T} X\Omega\big)^{T}\big(\Omega^{T}E\Omega\big)\big(\Omega^{T}D \Omega\big)\big(\Omega^{T} X\Omega\big)\\
  \mathcal{D} = &\mathcal{F}\mathcal{X}^{T}\mathcal{F}\mathcal{D}\mathcal{X}; \nonumber
\end{align}
in which
\begin{align}
\label{eqD}
&\mathcal{D}:=  \Omega^{T} D \Omega=\bigoplus_{r=1}^{N} 
\big(
\bigoplus_{j=1}^{\alpha_r}
(-1)^{j-1}\Delta_r
\big),\quad
\Delta_r:=\left\{
\begin{array}{ll}
\hspace{-1mm}\bigoplus_{j=1}^{m_r}
\begin{bsmallmatrix}
0 & 1\\
-1 & 0 
\end{bsmallmatrix}, &  \hspace{-1mm} c + \alpha_r \textrm{ odd}\\
\hspace{-1mm}(-1)^{\alpha_r} I_{m_r}, & \hspace{-1mm} c + \alpha_r \textrm{ even}
\end{array}
\right.,\\
\label{eqXF}
& \mathcal{F}:= \Omega^{T}E\Omega=\bigoplus_{r=1}^{N} E_{\alpha_r}(I_{\mu_r}), \qquad \mathcal{X}:=\Omega^{T}X\Omega.
\end{align}
In view of a special form of $X$, obtained above when solving $\mathcal{J}(\mathcal{A})X=X\mathcal{J}(\mathcal{A})$, we conclude that $\mathcal{X}$ is of the form (\ref{0T0}) for $\alpha=(\alpha_1,\ldots,\alpha_N)$, $\mu=(\mu_1,\ldots,\mu_N)$.

Next, by conjugating with an appropriate permutation matrix, we transform blocks of $\mathcal{D}$ in (\ref{eqD}) either to the $\pm$ identities or the standard symplectic matrices:
\begin{align*}
\mathcal{B}:= & \Phi^{T}\mathcal{D}\Phi=\bigoplus_{r=1}^{N} 
\big(
\bigoplus_{j=1}^{\alpha_r}
(-1)^{j-1}B_r
\big),\qquad
B_r:=\left\{
\begin{array}{ll}
\begin{bsmallmatrix}
0 & I_{m_r}\\
-I_{m_r} & 0 
\end{bsmallmatrix}, & c + \alpha_r \textrm{ odd}\\
(-1)^{\alpha_r } I_{m_r}, & c + \alpha_r \textrm{ even}
\end{array}
\right.,\\
& \Phi=\bigoplus_{r=1}^{N} 
\big(
\bigoplus_{j=1}^{\alpha_r}
(-1)^{j-1}\Phi_r
\big),\qquad
\Phi_r:=\left\{
\begin{array}{ll}
\Omega_{2,m_r}, & c + \alpha_r \textrm{ odd}\\
I_{m_r}, &  c + \alpha_r \textrm{ even}
\end{array}
\right..
\end{align*}
Since $\Phi_r^{-1} E_{\alpha_r}(I_{\mu_r})\Phi_r=E_{\alpha_r}(I_{\mu_r})$,
the matrix $\mathcal{F}$ in (\ref{eqXF}) stays intact after conjugation by $\Phi$, so the equation (\ref{ortoD2}) becomes:
%
\begin{align}\label{ortoD2B}
  (\Phi \mathcal{B}\Phi^T) = &\mathcal{F}(\Phi \Phi^{-1})\mathcal{X}^{T}(\Phi^T(\Phi^T)^{-1})\mathcal{F}(\Phi\mathcal{B}\Phi^T)\mathcal{X} \nonumber\\
	 \mathcal{B} = & (\Phi ^{-1}\mathcal{F}\Phi)(\Phi^{-1}\mathcal{X}(\Phi^T)^{-1})^T((\Phi^T)^{-1}\mathcal{F}\Phi)\mathcal{B}(\Phi^T\mathcal{X}(\Phi^T)^{-1}) \\
	 \mathcal{B} = & \mathcal{F}\mathcal{Y}^T\mathcal{F}\mathcal{B}\mathcal{Y}, \qquad \quad \mathcal{Y}:=\Phi^T\mathcal{X}(\Phi^T)^{-1},\nonumber
\end{align}
%
This equation is of the form (\ref{eqFYFIY}), with $\mathcal{F}$ and $\mathcal{B}=\mathcal{C}$ as in (\ref{BBF}), and $\mathcal{X}=\mathcal{Y}$ of the form (\ref{0T0}), all for $\alpha=(\alpha_1,\ldots,\alpha_N)$, $\mu=(\mu_1,\ldots,\mu_N)$.

Finally, $H$-Orthogonal or $H$-symplectic solutions of $\mathcal{M}Q=Q\mathcal{M}$ are of the form:
\[
Q=U^{-1}\Omega (\Phi^T)^{-1}\mathcal{Y}\Phi^T\Omega^{T}U=\Psi^{-1}\mathcal{Y}\Psi, \qquad \Psi:=\Phi^T\Omega^{T}U,
\]
with $\mathcal{Y}$ as a solution of (\ref{ortoD2B}), which is provided by Lemmas \ref{EqT}, \ref{lemauni}.
\end{proof}

\begin{proof}[Proof of Theorem \ref{stabw1}]
Choose $c\in \{1,2\}$ and let $H$ and the $H$-skew-symmetric ($c=1$) or $H$-Hamiltonian ($c=2$) normal form $\mathcal{A}$, with one eigenvalue $\lambda\in \mathbb{C}\setminus\{0\}$, be:
\begin{align}
\label{H1l}
\mathcal{A}
=\bigoplus_{r=1}^{N}\Big( \bigoplus_{j=1}^{m_r}  \big( J_{\alpha_r}(\lambda)\oplus -(J_{\alpha_r}(\lambda))^T \big)\Big),\qquad 
H=\bigoplus_{r=1}^{N}\Big( \bigoplus_{j=1}^{m_r}  
\begin{bsmallmatrix}
0 & I_{\alpha_r}\\
(-1)^{c+1} I_{\alpha_r} & 0 
\end{bsmallmatrix} \Big).
\end{align}
Let $\Gamma_{\alpha_r}$ be as in (\ref{Jblock}), and denote by $\mathcal{J}(\mathcal{A})$ the Jordan canonical form of $\mathcal{A}^{}$ and by $U$ the corresponding transition matrix $U$ (i.e. $\mathcal{M}=U^{-1}\mathcal{J}(\mathcal{M}) U$):
\begin{align}\label{JCF1}
&\mathcal{J}(\mathcal{A})=\bigoplus_{r=1}^{N}\Big( \bigoplus_{j=1}^{m_r} \big( J_{\alpha_r}(\lambda)\oplus  J_{\alpha_r}(-\lambda)\big)\Big),\qquad 
U=\bigoplus_{r=1}^{N}\big( \bigoplus_{j=1}^{m_r} 
(I_{\alpha_r}\oplus  \Gamma_{\alpha_r})
\big).
\end{align}

We conjugate $\mathcal{J}(\mathcal{A})$ by an appropriate permutation matrix to collect together blocks corresponding to the same eigenvalue ($\lambda$ or $-\lambda$).
We use an analog of (\ref{perS}) (with $\alpha=2$) for block matrices. Define a $2m$-by-$2m$ block  permutation matrix whose blocks are identities and zero matrices:
\begin{equation}\label{perS2}
\Omega_{2,m}(I_{\alpha}):=\left[e_1(I_{\alpha})\;\,e_{3}(I_{\alpha})\;\,\ldots\;\,e_{2m-1}(I_{\alpha})\;\,e_2(I_{\alpha})\;\,e_{4}(I_{\alpha})\;\,\ldots\;\,e_{2m}(I_{\alpha})\right],
\end{equation}
in which $e_j(I_{\alpha})$ is a column ($n$ rows) with $I_{\alpha}$ in the $j$-th row and ${\alpha}$-by-${\alpha}$ zero matrices otherwise.
Post-multiplication by $\Omega_{2,m}(I_{\alpha})$ (with $\Omega_{2,2m}^{T}$) puts the 1st, the 3rd, \ldots, the $(2m-1)$-th column, the 2nd, the 4th,\ldots, the $2m$-th (row) together. Thus:
\begin{align}\label{J1F}
\widetilde{\Omega}_1^{T}\mathcal{J}(\mathcal{A})\widetilde{\Omega}_1=\bigoplus_{r=1}^{N}\Big( \bigoplus_{j=1}^{m_r} J_{\alpha_r}(\lambda)\oplus \bigoplus_{j=1}^{m_r} J_{\alpha_r}(-\lambda)\Big), \qquad \widetilde{\Omega}_1:=\bigoplus_{r=1}^{N} \Omega_{2,2m_r}(I_{\alpha_r}).
\end{align}
Similarly as in (\ref{perS2}), we can also permute blocks of possibly different dimensions:
\small
\begin{align}\label{perK2}
\widetilde{\Omega}_2=\left[e_1(I_{m_1\alpha_1})\;e_{3}(I_{m_2\alpha_2})\;\ldots\;e_{2N-1}(I_{m_N\alpha_N})\;e_2(I_{m_1\alpha_1})\;e_{4}(I_{m_2\alpha_2})\;\ldots\;e_{2N}(I_{m_N\alpha_N})
\right], 
\end{align}
\normalsize
where $e_j(I_{m_r\alpha_r})$ is a column ($2N$ rows) with $I_{m_r\alpha_r}$ in the $j$-th row and the appropriate zero matrices elsewhere.
Multiplication by $\widetilde{\Omega}_2$ (by $\widetilde{\Omega}_2^{T}$) from the right (left)
arranges columns (rows) in the same order as would $\Omega_{2,N}(I_{\alpha})$.
From (\ref{J1F}) we get
\begin{equation}\label{J2F}
\widetilde{\mathcal{J}}(\mathcal{A}):=\widetilde{\Omega}^{T}\mathcal{J}(\mathcal{M})\widetilde{\Omega}=
\bigoplus_{r=1}^{N}\big( \bigoplus_{j=1}^{m_r} J_{\alpha_r}(\lambda)\big)\oplus 
\bigoplus_{r=1}^{N}\big( \bigoplus_{j=1}^{m_r}  J_{\alpha_r}(-\lambda)\big), \quad 
\widetilde{\Omega}:=\widetilde{\Omega}_1\widetilde{\Omega}_2.
\end{equation}
%

By setting $\widetilde{U}=\widetilde{\Omega}^TU$, the equation $\mathcal{M}Q=Q\mathcal{M}$ (see (\ref{HQQH})) with $H$-orthogonal or $H$-symplectic $Q$, transforms to 
(\ref{JQQJ}) with $U$ replaced by $\widetilde{U}$:
\begin{equation}\label{wJQQwJ}
\widetilde{\mathcal{J}}(\mathcal{M})X=X\widetilde{\mathcal{J}}(\mathcal{M}), \qquad X=\widetilde{U} Q \widetilde{U}^{-1},\quad 
\qquad (\widetilde{U}=\widetilde{\Omega}^TU,\,\, Q^T H Q=H).
\end{equation}
Proposition \ref{resAoXXA} gives the solution $X=X_1\oplus X_2$ of (\ref{wJQQwJ}), where $X_1,X_2$ are $N$-by-$N$ block matrices such that their blocks $(X_1)_{rs}$, $(X_2)_{rs}$ are $m_r$-by-$m_s$ block matrices whose blocks are rectangular Toeplitz of size $\alpha_r\times \alpha_s$
and of the form (\ref{QTY}).

In view of (\ref{QK}) for $U=\widetilde{U}$, now $H$-orthogonality or $H$-symplecticity of $Q$ yields:
%
\begin{align}\label{QTQHH}
\widetilde{\Omega}^{T}(U^{-1})^{T} H U^{-1}\widetilde{\Omega} =   &  X^T \big(\widetilde{\Omega}^{T}(U^{-1})^{T} H U^{-1} \widetilde{\Omega}\big) X, \qquad \quad
(Q=\widetilde{U}^{-1} X \widetilde{U}).
\end{align}
We again use $\Gamma_{\alpha}^{-1}=(-1)^{\alpha +1}\Gamma_{\alpha}^{}$, but in contrast to (\ref{UTHU}), we now derive 
\begin{align}\label{s2aH}
( (I_{\alpha} \oplus \Gamma_{\alpha})^{-1})^T\begin{bmatrix}
0 & I_{\alpha} \\
(-1)^{c+1} I_{\alpha} & 0
\end{bmatrix}(I_{\alpha} \oplus \Gamma_{\alpha})^{-1}
= &
\begin{bmatrix}
0 & (-1)^{\alpha-1}\Gamma_{\alpha}\\
(-1)^{c+\alpha} \Gamma_{\alpha}^{T} & 0 
\end{bmatrix}
=
\\
=
\big((-1)^{\alpha-1}\Gamma_{\alpha}\oplus (-1)^{c+\alpha}\Gamma_{\alpha}^T\big)
\begin{bmatrix}
0 & I_{\alpha}\\
I_{\alpha} & 0 
\end{bmatrix}.  \nonumber
\end{align}
Using (\ref{s2aH}) we now decompose $(U^{-1})^{T} H U^{-1} $ (the trick of the proof):
\begin{align}\label{deDVH}
&(U^{-1})^{T} H U^{-1}
=\bigoplus_{r=1}^{N}\left(\bigoplus_{j=1}^{m_r} 
\Big(
\big((-1)^{\alpha_r-1}\Gamma_{\alpha_r}\oplus (-1)^{c+\alpha_r}\Gamma_{\alpha_r}^T\big)
\begin{bmatrix}
0 & I_{\alpha_r}\\
I_{\alpha_r} & 0 
\end{bmatrix}
\Big)\right)
=D K,\\
& D:=\bigoplus_{r=1}^{N}\Big(\bigoplus_{j=1}^{m_r}  \big((-1)^{\alpha_r-1}\Gamma_{\alpha_r}\oplus (-1)^{c+\alpha_r}\Gamma_{\alpha_r}^T \big)  \Big), \qquad 
K: =\bigoplus_{r=1}^{N}\big(\bigoplus_{j=1}^{m_r}  
\begin{bsmallmatrix}
0 & I_{\alpha_r}\\
I_{\alpha_r} & 0 
\end{bsmallmatrix}
 \big).
\end{align}
Furthermore, (\ref{deDVH}) yields the decomposition of $\widetilde{\Omega}^{T} \big((U^{-1})^{T} H U^{-1}\big) \widetilde{\Omega}$:
\begin{align}
\label{BK}
&\widetilde{\Omega}^{T} \big((U^{-1})^{T} H U^{-1}\big) \widetilde{\Omega}
=(\widetilde{\Omega}^{T} D \widetilde{\Omega})(\widetilde{\Omega}^{T} K \widetilde{\Omega})=B \widetilde{K},
\qquad n:=\sum_{r=1}^{N}\alpha_r m_r,\\
\widetilde{K}:=\widetilde{\Omega}^{T} V \widetilde{\Omega} & =
\begin{bmatrix}
0 & I_{n}\\
I_{n} & 0 
\end{bmatrix},\,\,\,
B:=\widetilde{\Omega}^{T} D \widetilde{\Omega}
=B_1 \oplus (-1)^{c+1} B_1^{T}, \,\,\, B_1:=\bigoplus_{r=1}^{N}\big(\bigoplus_{j=1}^{m_r} (-1)^{\alpha_r-1} \Gamma_{\alpha_r}  \big).\nonumber
\end{align}
By applying (\ref{BK}) to (\ref{QTQHH}) we obtain:
\begin{align}\label{QTQH2}
B \widetilde{K} = &  X^{T}  B \widetilde{K} X \nonumber\\
 B_1 \oplus (-1)^{c+1} B_1^{T} = & (X_1^{T} \oplus X_2^{T}) ( B_1 \oplus (-1)^{c+1} B_1^{T}) (\widetilde{K} (X_1 \oplus X_2) \widetilde{K}^{-1})\\
B_1 \oplus  (-1)^{c+1} B_1^{T} = & (X_1^{T} \oplus X_2^{T}) (  B_1 \oplus (-1)^{c+1} B_1^{T}) (X_2 \oplus X_1)\nonumber\\
B_1 = &  X^{T}_1 B_1 X_2,\qquad \quad ((-1)^{c+1} B_1^{T}=X^{T}_2 \big(\,(-1)^{c+1} B_1^{T}) X_1 \,\big). \nonumber
\end{align}

By conjugating both hand sides of the last equation of (\ref{QTQH2}) by $\Omega$ from Lemma \ref{lemaP}, and then slightly manipulating it,
we deduce
\begin{align}\label{ortoD3}
  \Omega^{T}B_1\Omega = & (\Omega^{T} X_1\Omega)^{T}(\Omega^{T}B_1\Omega)(\Omega^{T} X_2\Omega)\\
\mathcal{B}_1= &  \mathcal{X}^{T}_1\mathcal{B}_1 \mathcal{X}_2 \qquad \qquad (\mathcal{X}_1:=\Omega^{T} X_1\Omega, \quad \mathcal{X}_2:=\Omega^{T} X_2\Omega),\nonumber
\end{align}
where $\mathcal{B}_1:= \bigoplus_{r=1}^{N}\big( 
(-1)^{\alpha_r-1} \Gamma_{\alpha}(I_{m_r})
\big)$, in which 
$
\Gamma_{\alpha_r}(I_{m_r})
$ is a block exchange matrix with altermating $\pm I_{m_r}$ on the anti-diagonal (see (\ref{Jblock})).
We observe that the form of the solution $X=X_1\oplus X_2$ of (\ref{wJQQwJ}) yields $\mathcal{X}_1, \mathcal{X}_2\in \mathbb{T}^{\alpha,\mu}$ (cf. (\ref{0T0}), Proposition \ref{lemanilpo}). 
Therefore, $H$-orthogonal or $H$-symplectic solution $Q$
of $\mathcal{A}Q=Q\mathcal{A}$ is of the form
\begin{align}\label{QPsi}
&Q=U^{-1}\widetilde{\Omega}(X_1\oplus X_2)\widetilde{\Omega}^{T} U
=\Psi^{-1} \big(\mathcal{X}_1\oplus (\mathcal{X}^{T}_1)^{-1}\big)\Psi, \qquad \mathcal{X}_1\in \mathbb{T}^{\alpha,\mu},\\
&\qquad \Psi:=(I\oplus \mathcal{B}_1)\big((\Omega^{T}\oplus \Omega^{T})\widetilde{\Omega}^{T} U\big).\nonumber
\end{align}
This completes the proof of the theorem.
\end{proof}

\begin{remark}\label{Rfw}
In the real case the situation is somewhat more involved. Real normal forms are more numerous and, to some extent, structurally more complicated; see \cite{Laub}, \cite{LanRod}. The corresponding centralizers and isotropy groups are considered with respect to real $H$-orthogonal or $H$-simplectic similarity. Applying the same general approach as in this paper, however, require solving matrix equations over reals, which is a significant difference. This issue will be addressed in future work.
\end{remark}

\section{Acknowledgement}

The research was supported by 
Slovenian Research and Innovation Agency (grants no. P1-0291 and no. J1-3005).




\begin{thebibliography}{80}

\small
\baselineskip=10pt












\bibitem{TeranDopi2}
F. De Teran, F. M. Dopico,
The solution of the equation $XA+ AX^T= 0$ and its application to the theory of orbits,
Lin. Alg. Appl. 434 (1), 44-67.


\bibitem{DK}
A. Dmytryshyn, B. K\aa gstr\"{o}m,
Coupled Sylvester-type Matrix Equations and Block Diagonalization,
SIAM J. Matrix Anal. Appl. 36 (No. 2) (2015),  580-593.


\bibitem{DKS}
A. Dmytryshyn, B. K\aa gstr\"{o}m, V. V. Sergeichuk,
Skew-symmetric matrix pencils: Codimension counts and the solution of a pair of matrix equations,
Linear Algebra Appl. 438 (No. 8) (2013), 3375-3396.




\bibitem{Djok2} D.\v{Z}. \DJ okovi\'v, K. Rietsch, K. Zhao, 
Normal forms for orthogonal similarity classes of skew-symmetric matrices,
J. Algebra 308 (2007) 686-703.






\bibitem{Gant}
F. R. Gantmacher,
The theory of matrices,
Chelsea Publishing Company, New York, 1959. 








\bibitem{HornJohn} 
R. A. Horn, C. R. Johnson,
Matrix analysis,
Cambridge University Press, Cambridge, 1990.



\bibitem{HornMerino}
R. A. Horn, D. I. Merino,
Contragradient equivalence: a canonical form and some applications,
Linear Algebra Appl., 214 (1995), pp. 43-92.




\bibitem{Hua45}
L. K. Hua,
Geometries of matrices I. Generalizations of von Staudt's theorem,
Trans. Amer. Math. Soc. 57 (1945), 441-481.






\bibitem{LanRod}
P. Lancaster, L. Rodman,
Algebraic Riccati Equations,
Clarendon Press, Oxford, 1995.


\bibitem{Laub}
A. J. Laub, K. Meyer,
Canonical forms for symplectic and Hamiltonian matrices. 
Celestial Mechanics 9 (1974), 213-238.




\bibitem{Lin}
W. W. Lin, V. Mehrmann, H. Xu,
Canonical Forms for Hamiltonian and Symplectic Matrices and Pencils,
Linear Algebra Appl. 302-303 (1999), 469-533.      


\bibitem{Seitz}
M. W., Liebeck, G. M. Seitz, 
Unipotent and nilpotent classes in simple algebraic groups and lie algebras, 
Providence, American Mathematical Society, 2012.






\bibitem{Mehl}
C. Mehl,
On classification of normal matrices in indefinite inner product spaces,
Electron. J. Linear Algebra 15 (2006), 50-83.

\bibitem{Milne}
J. S. Milne,
Algebraic Groups: The Theory of Group Schemes of Finite Type over a Field,
Cambridge: Cambridge University Press, 2017.















\bibitem{Spring}
T.A. Springer, R. Steinberg, Conjugacy classes. In: Seminar on Algebraic Groups and Related Finite Groups. Lecture Notes in Mathematics, vol 131. Springer, Berlin, Heidelberg, 1970.


\bibitem{TSH}
T. Star\v{c}i\v{c},
Hong's canonical form of a Hermitian matrix
with respect to orthogonal *congruence,
Linear Algebra Appl. 630 (2021), 241-251.


\bibitem{TSOC}
T. Star\v{c}i\v{c},
Isotropy groups of the action of orthogonal *congruence on Hermitian matrices, 
Linear Algebra Appl. 694 (2024), 101-135.


\bibitem{TSOS}
T. Star\v{c}i\v{c},
Isotropy groups of the action of orthogonal similarity on symmetric matrices,
Linear Multilinear Algebra. 71 (no. 5) (2023), 842-866.






\bibitem{Wan}
Z. Wan,
Geometry of matrices,
World Scientific, New York Heidelberg Berlin, 1996.


\bibitem{Well}
J. Wellstein, \" Uber symmetrische, alternierende und orthogonale Normalformen von Matrizen. J. f\" ur Reine Angew. Math., vol. 1930, no. 163, 1930, pp. 166-182.


\bibitem{Weyl}
H. Weyl,
The classical groups, their invariants and representations,
Princeton University Press, 1946. 

\end{thebibliography}
\end{document}